\documentclass[a4paper]{amsart}
\usepackage{amssymb}
\usepackage{graphics}
\usepackage[all]{xy}

\newtheorem{thm}{Theorem}[section]
\newtheorem{prop}[thm]{Proposition}
\newtheorem{cor}[thm]{Corollary}
\newtheorem{lem}[thm]{Lemma}

\theoremstyle{definition}
\newtheorem{dfn}[thm]{Definition}
\newtheorem{ex}[thm]{Example}

\newtheorem{que}[thm]{Question}

\theoremstyle{remark}
\newtheorem{rem}[thm]{Remark}

\newcommand{\RR}{\mathbb R}

\newcommand{\CC}{\mathbb C}

\newcommand{\rk}{\operatorname{rank}}

\newcommand{\Ad}{\operatorname{Ad}}

\newcommand{\supp}{\operatorname{supp}}

\newcommand{\mfk}{\mathfrak{k}}

\newcommand{\mfg}{\mathfrak{g}}
\newcommand{\mfh}{\mathfrak{h}}
\newcommand{\mft}{\mathfrak{t}}
\newcommand{\mfp}{\mathfrak{p}}

\newcommand{\bas}{\mathrm{bas}}
\newcommand{\Hb}{H_{\bas}}
\newcommand{\reg}{\mathrm{reg}}

\newcommand{\Mr}{M_{\reg}}

\newcommand{\Tors}{\operatorname{Tors}}
\newcommand{\depth}{\operatorname{depth}}

\newcommand{\Ext}{\operatorname{Ext}}
\newcommand{\Mmax}{M_{\max}}



\newcommand{\into}{\hookrightarrow}
\DeclareMathOperator{\tensor}{\otimes}

\renewcommand{\iff}{\Leftrightarrow}




\begin{document}

\title[Torsion and Cohen-Macaulay actions]{Torsion in equivariant cohomology and Cohen-Macaulay $G$-actions}
\subjclass{Primary 55N25, Secondary 57S15, 57R91}

\author{Oliver Goertsches}
\address{Oliver Goertsches, Mathematisches Institut, Universit\"at zu K\"oln, Weyertal 86-90, 50931 K\"oln, Germany}
\email{ogoertsc@math.uni-koeln.de}
\author{S\"onke Rollenske}

\address{S\"onke Rollenske,
Mathematisches Institut, Rheinische Fried\-rich{\-}-Wil\-helms-Uni\-versi\-t\"at Bonn, 
 Endenicher Allee 60,  53115 Bonn, Germany}
\email{srollens@math.uni-bonn.de}

\begin{abstract}
We show that the well-known fact that the equivariant cohomology of a torus action is a torsion-free module if and only if the map induced by the inclusion of the fixed point set is injective generalises to actions of arbitrary compact connected Lie groups if one replaces the fixed point set by the set of points with maximal isotropy rank. This is true essentially because the action on this set is always equivariantly formal.

In case this set is empty we show that the induced action on the set of points with highest occuring isotropy rank is Cohen-Macaulay. It turns out that just as equivariant formality of an action is equivalent to equivariant formality of the action of a maximal torus, the same holds true for equivariant injectivity and the Cohen-Macaulay property. In addition, we find a topological criterion for equivariant injectivity in terms of orbit spaces.
\end{abstract}
\maketitle

\section{Introduction}

The relation between the theory of equivariant cohomology and  the topology of Lie group actions is most visible in the case of torus actions. For example,  a weak version of the famous Borel localisation theorem states that for a sufficiently nice space $M$ acted on by a real torus $T$, the natural restriction map to the fixed point set $H^*_T(M)\to H^*_T(M^T)$  is an isomorphism modulo torsion. Hence, the equivariant cohomology of a torus action encodes much information about the fixed point set and vice versa. However, this is no longer true if we replace $T$ by a general compact connected Lie group $G$: for the Borel localisation theorem to hold true one must replace  $M^T$ by $\Mmax$, the set  of points whose isotropy groups have the same rank as $G$.\footnote{This was observed for example in \cite{GGK}. See the first assertion of Theorem C.70, which is a special case of the general version of the localisation theorem as in \cite[Theorems III.1 and III.1']{Hsiang} or \cite[Theorem 3.2.6]{AlldayPuppe}.}

Our first aim is to shed some light on  a different aspect of the same story. In analogy to the torus case, again replacing $M^T$ by $\Mmax$,  we prove in Section \ref{sec:injectivity} that  the kernel of $H^*_G(M)\to H^*_G(\Mmax)$ is exactly  the torsion submodule. On the way we observe that the $G$-action on $\Mmax$ is always equivariantly formal, thereby answering a question posed by Guillemin, Ginzburg and Karshon \cite[Remark C.72]{GGK}.

The above considerations do not give any information as soon as  $\Mmax=\emptyset$, that is,  the full equivariant cohomology is a torsion module. In order to be able to study such more general actions the notion of Cohen-Macaulay action was introduced in \cite{GT}. In Section \ref{prep} we recall this definition and some of the basic properties.
If one regards the union of lowest-dimensional orbits as a natural substitute for  the fixed point set such actions share many of the good properties of equivariantly formal actions. 

In Section \ref{sec:lowrank} we generalise  the results described above to the stratum of highest isotropy rank: given a $G$-action on $M$, we show that if $b$ is the maximal rank of a $G$-isotropy, then the $G$-action on the set $M_{b,G}$ of points with $G$-isotropy rank equal to $b$ is always Cohen-Macaulay. It follows that $H^*_G(M)\to H^*_G(M_{b,G})$ is injective if and only if $H^*_T(M)\to H^*_T(M_{b,T})$ is injective.

In Section \ref{sec:characterisationofinjectivity} we investigate further the condition of equivariant injectivity of torus actions, i.e., injectivity of $H^*_T(M)\to H^*_T(B)$, where $B$ is the infinitesimal bottom stratum of the action. In many cases (e.g.~for Cohen-Macaulay actions), $B$ coincides with the set $M_{b,T}$. Combining a result from \cite{GT} with results from \cite{Duflot} we can give a geometric characterisation of equivariant injectivity in terms of the orbit space of the action. To formulate the statement we introduce the notion of having no essential basic cohomology. 


\subsection*{{Acknowledgements:}}
We are grateful to Dirk T\"oben for several enlightening discussions and to Igor Burban for some help on Cohen-Macaulay modules. The second author was partly supported by the Hausdorff Centre for Mathematics in Bonn.

\section{Preparations}\label{prep}

Starting with Section \ref{notations},  $M$ will always denote a compact differentiable manifold on which either a torus $T$ or a compact connected Lie group $G$ acts. 
However, in Sections \ref{sec:injectivity} and \ref{sec:lowrank} we encounter associated $G$-spaces that are not necessarily smooth: for example, we will investigate the action on $\Mmax=\{p\in M\mid \rk \mfg_p=\rk \mfg\}$. It is therefore essential to remark that the theory of equivariant cohomology, including central notions like equivariant formality or Cohen-Macaulayness, make sense for more general well-behaved $G$-spaces, e.g.~as in \cite[Definition 3.2.4]{AlldayPuppe}. In this section we assume (for example) that $M$ is a compact Hausdorff space such that for all connected closed subgroups $K\subset G$  we have $\dim H^*(M^K)<\infty$, where $M^K$ is the set of $K$-fixed points in $M$. Here, $H^*$ denotes \v{C}ech cohomology with real (or rational or complex) coefficients, but note that for all spaces occurring in later sections, \v{C}ech cohomology coincides with singular cohomology.

The equivariant cohomology of an action of a compact connected Lie group $G$ on $M$ is by definition the cohomology of the Borel construction
\[
H^*_G(M)=H^*(EG\times_G M).
\]
The projection $EG\times_G M\to EG/G=BG$ to the classifying space $BG$ of $G$ induces on $H^*_G(M)$ the structure of an $H^*(BG)$-algebra. Note that $H^*(BG)$ is the ring $S(\mfg^*)^G$ of $G$-invariant polynomials on the Lie algebra $\mfg$. 

\begin{dfn} An action of a compact connected Lie group $G$ on $M$ is equivariantly formal if $H^*_G(M)$ is a free module over $S(\mfg^*)^G$.
\end{dfn}
This terminology  was introduced in \cite{GKM}, and generalised in \cite{GT} to

\begin{dfn} The $G$-action on $M$ is a Cohen-Macaulay action if $H^*_G(M)$ is a Cohen-Macaulay module over $S(\mfg^*)^G$.
\end{dfn}

Our general reference for the notion and basic properties of Cohen-Macaulay modules is \cite{BrunsHerzog}. Note that the theory for modules over local rings works as well for graded modules over graded polynomial rings in several variables, see e.g.~\cite[Section 5]{GT}.

As for our spaces $H^*(M)$ is a finite-dimensional vector space, it follows from \cite[Corollary 4.2.3]{AlldayPuppe} that an action of a torus $T$ on $M$ is equivariantly formal if and only if the spectral sequence associated with the fibration $ET\times_T M\to BT$ collapses at the $E_2$-term. Traditionally, one would have said that $M$ is totally nonhomologous to zero in $ET\times_T M\to BT$. This condition in turn is the same as that $H^*_T(M)=H^*(M)\otimes S(\mft^*)$ as graded $S(\mft^*)$-modules. We can deduce the corresponding equivalence for actions of arbitrary connected compact Lie groups:
\begin{prop} \label{prop:Gequivformalfree}An action of a compact connected Lie group $G$ is equivariantly formal if and only if $H^*_G(M)=H^*(M)\otimes S(\mfg^*)^G$ as graded $S(\mfg^*)^G$-modules.
\end{prop}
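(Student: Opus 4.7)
The plan is to reduce the claim to the already-proven torus case by passing to a maximal torus $T \subset G$ with Weyl group $W = N(T)/T$. Two classical tools come into play: Chevalley's restriction isomorphism $S(\mfg^*)^G \cong S(\mft^*)^W$, and Borel's result $H^*_G(M) \cong H^*_T(M)^W$ valid for any reasonable $G$-space. The backward direction of the proposition is immediate, since $H^*(M) \otimes S(\mfg^*)^G$ is manifestly a free $S(\mfg^*)^G$-module.

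For the forward direction I would first show that $G$-equivariant formality implies $T$-equivariant formality. The natural map $EG \times_T M \to EG \times_G M$ is a fibre bundle with fibre $G/T$, and a classical fact of Borel says that $G/T$ is totally non-homologous to zero in $BT \to BG$. Pulling back classes in $H^*(BT)$ that lift a basis of $H^*(G/T)$ along the projection $EG \times_T M \to BT$ produces classes on $EG \times_T M$ satisfying the hypotheses of Leray--Hirsch, which yields an isomorphism
\[
H^*_T(M) \cong H^*_G(M) \otimes_{S(\mfg^*)^G} S(\mft^*)
\]
of graded $S(\mft^*)$-modules. Since $S(\mft^*)$ is free over $S(\mfg^*)^G$, base change preserves freeness, so $H^*_T(M)$ is free over $S(\mft^*)$, and the torus case then provides an isomorphism $H^*_T(M) \cong H^*(M) \otimes S(\mft^*)$ of graded $S(\mft^*)$-modules. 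To conclude I would take $W$-invariants, observing that $W$ acts trivially on $H^*(M)$: any element of $N(T)$ acts on $M$ through a self-map isotopic to the identity, because $G$ is connected. Combining with Borel's isomorphism,
\[
H^*_G(M) \cong H^*_T(M)^W \cong \bigl(H^*(M) \otimes S(\mft^*)\bigr)^W \cong H^*(M) \otimes S(\mft^*)^W \cong H^*(M) \otimes S(\mfg^*)^G
\]
as graded $S(\mfg^*)^G$-modules.

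The main obstacle is verifying that the torus isomorphism $H^*_T(M) \cong H^*(M) \otimes S(\mft^*)$ can be chosen so that the fourth isomorphism in the last display, which requires pulling the $W$-action across the tensor decomposition, is valid. A clean alternative that sidesteps this subtlety is a Poincar\'e series comparison: writing $H^*_G(M) \cong \bigoplus_i S(\mfg^*)^G[-d_i]$ (free by hypothesis) and using the Leray--Hirsch formula together with $P(S(\mft^*)) = P(H^*(G/T)) \cdot P(S(\mfg^*)^G)$, one deduces $\sum_i t^{d_i} = P(H^*(M))$, from which the desired graded $S(\mfg^*)^G$-module isomorphism follows directly.
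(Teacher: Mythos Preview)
Your main line of argument is essentially the paper's proof: pass to the maximal torus via $H^*_T(M)\cong H^*_G(M)\otimes_{S(\mfg^*)^G} S(\mft^*)$, deduce $T$-equivariant formality, invoke the torus case, and recover the $G$-statement by taking Weyl invariants using $H^*_G(M)=H^*_T(M)^W$. The paper simply asserts the step you flag as an obstacle---that the $W$-action on $H^*(M)\otimes S(\mft^*)$ is the diagonal one with trivial action on the first factor---without further justification. One way to make this precise is to average any linear section of the surjective edge map $H^*_T(M)\to H^*(M)$ over $W$; the resulting section lands in $H^*_T(M)^W$ and extends to a $W$-equivariant $S(\mft^*)$-module isomorphism $H^*(M)\otimes S(\mft^*)\to H^*_T(M)$. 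Your Poincar\'e series alternative is also correct and is a clean way to bypass this issue entirely: free graded $S(\mfg^*)^G$-modules are determined up to isomorphism by the multiset of generator degrees, so the equality $\sum_i t^{d_i}=P(H^*(M))$ already forces the desired graded isomorphism.
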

\begin{proof} If the $G$-action is equivariantly formal, then $H^*_G(M)$ is by definition a free module over $S(\mfg^*)^G$. We have that $H^*_T(M)=H^*_G(M)\otimes_{S(\mfg^*)^G} S(\mft^*)$ as $S(\mft^*)$-modules by \cite[Theorem C.35]{GGK} or \cite[Proposition 1, p.~38]{Hsiang}, hence $H^*_T(M)$ is a free module over $S(\mft^*)$ and the $T$-action on $M$ is equivariantly formal. As explained above, this implies that $H^*_T(M)=H^*(M)\otimes S(\mft^*)$. But then, using that the $G$-equivariant cohomology $H^*_G(M)$ is equal to the subalgebra $H^*_T(M)^W$ of elements in $H^*_T(M)$ invariant under the Weyl group $W$ of $G$ (see again \cite[Theorem C.35]{GGK} or \cite[Proposition 1, p.~38]{Hsiang}), one sees that $H^*_G(M)=H^*(M)\otimes S(\mft^*)^W=H^*(M)\otimes S(\mfg^*)^G$ because the $W$-action on $H^*(M)\otimes S(\mft^*)$ is induced by the trivial one on $H^*(M)$ and the standard one on $S(\mft^*)$.
\end{proof}
The proof of Proposition \ref{prop:Gequivformalfree} is essentially copied from \cite[Proposition C.26]{GGK}. In particular it shows that our definition of equivariant formality coincides with the one given in \cite{GGK}, hence we have
\begin{prop}[{\cite[Proposition C.26]{GGK}}]\label{prop:GTeqformal} If $T$ is a maximal torus in $G$, then a $G$-action on $M$ is equivariantly formal if and only if the induced $T$-action is equivariantly formal.
\end{prop}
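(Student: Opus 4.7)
The plan is to extract this statement as a direct consequence of the algebraic relations established in the proof of Proposition \ref{prop:Gequivformalfree}, together with Chevalley's theorem. The two structural ingredients I want to use are: (i) the base-change identity $H^*_T(M) = H^*_G(M) \otimes_{S(\mfg^*)^G} S(\mft^*)$ and (ii) the Weyl-invariance identity $H^*_G(M) = H^*_T(M)^W$, both cited from \cite{GGK, Hsiang}. The other key fact is that $S(\mft^*)$ is a free module of finite rank over $S(\mft^*)^W = S(\mfg^*)^G$.

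For the forward direction, I would argue that if $H^*_G(M)$ is free over $S(\mfg^*)^G$, then by (i) the $S(\mft^*)$-module $H^*_T(M)$ is obtained by extension of scalars from a free module, hence is itself free; so the $T$-action is equivariantly formal. This is the easy half and essentially appears in the first three lines of the proof of Proposition \ref{prop:Gequivformalfree}.

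For the reverse direction, suppose the $T$-action is equivariantly formal. Then by Proposition \ref{prop:Gequivformalfree} applied to the torus $T$ (or equivalently by the spectral-sequence criterion recalled just before that proposition), $H^*_T(M) \isom H^*(M) \otimes S(\mft^*)$ as graded $S(\mft^*)$-modules, with $W$ acting trivially on the $H^*(M)$-factor and in the standard way on $S(\mft^*)$. Passing to Weyl-group invariants and applying (ii) gives
\[
H^*_G(M) = H^*_T(M)^W = H^*(M) \otimes S(\mft^*)^W = H^*(M) \otimes S(\mfg^*)^G,
\]
which is manifestly a free $S(\mfg^*)^G$-module. So the $G$-action is equivariantly formal.

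I do not expect a serious obstacle here: the content is entirely contained in the preceding proposition, and the statement is really a reformulation. The only subtle point is making sure the $W$-action on $H^*(M) \otimes S(\mft^*)$ is indeed trivial on the first tensor factor, which follows because the $W$-action on $H^*_T(M)$ is natural in $M$ and $W$ acts trivially on $H^*(M) = H^*_T(M) / (S^+(\mft^*))$ (or more directly, because the isomorphism $H^*_T(M) \isom H^*(M) \otimes S(\mft^*)$ can be chosen $W$-equivariantly, as in the discussion in \cite{GGK}).
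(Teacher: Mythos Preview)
Your proposal is correct and follows exactly the paper's approach: the paper does not give a separate proof of this proposition but simply notes that both directions are already contained in the proof of Proposition~\ref{prop:Gequivformalfree}, using the base-change identity for the forward implication and the Weyl-invariance identity (with the $W$-action trivial on $H^*(M)$) for the converse. Your only additional remark---justifying why $W$ acts trivially on the $H^*(M)$ factor---is the same point the paper makes explicitly in that earlier proof.
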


Combining Proposition \ref{prop:GTeqformal} with the fact that a $T$-action on $M$ is equivariantly formal if and only if it is Cohen-Macaulay and has fixed points \cite[Proposition 6.4]{GT} we obtain 

\begin{prop} \label{prop:CM+maxrkisEF}
A $G$-action on $M$ is equivariantly formal if and only if it is Cohen-Macaulay and there is some point with maximal isotropy rank, i.e., a point $p\in M$ with $\rk \mfg_p=\rk \mfg$.
\end{prop}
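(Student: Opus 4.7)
The plan is to reduce the statement to the torus case, where it is provided by \cite[Proposition 6.4]{GT}. Fix a maximal torus $T\subset G$. By Proposition \ref{prop:GTeqformal}, the $G$-action on $M$ is equivariantly formal if and only if the induced $T$-action on $M$ is; and by \cite[Proposition 6.4]{GT}, the latter holds if and only if the $T$-action on $M$ is Cohen-Macaulay and $M^T\neq\emptyset$. Consequently it suffices to translate these two $T$-theoretic conditions into the desired $G$-theoretic ones, namely the Cohen-Macaulay property of the $G$-action and the existence of a point of maximal isotropy rank.

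The equivalence $M^T\neq\emptyset \iff \Mmax\neq\emptyset$ is straightforward. Clearly $T\subseteq G_p$ for $p\in M^T$ implies $\rk\mfg_p=\rk\mfg$. Conversely, if $\rk\mfg_p=\rk\mfg$, then $\mfg_p$ contains a maximal torus subalgebra of $\mfg$, which is conjugate to $\mft$; say $\mft=\Ad(g)(\mft')$ with $\mft'\subset\mfg_p$. Then $\mft\subset\mfg_{gp}=\Ad(g)(\mfg_p)$, so $gp\in M^T$.

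The transfer of Cohen-Macaulayness between $G$ and $T$ is the main technical point. Set $R:=S(\mfg^*)^G$ and $S:=S(\mft^*)$. By Chevalley's theorem, $S$ is a free graded $R$-module of rank $|W|$, and we have already used in the proof of Proposition \ref{prop:Gequivformalfree} the isomorphism $H^*_T(M)=H^*_G(M)\tensor_R S$ of $S$-modules. Since $S$ is $R$-flat, tensoring a graded free resolution of $H^*_G(M)$ over $R$ with $S$ yields a free resolution of $H^*_T(M)$ over $S$, and the (graded) projective dimensions agree; the graded Auslander--Buchsbaum formula, together with the fact that $R$ and $S$ are Cohen-Macaulay polynomial rings of the same Krull dimension $\rk G$, then shows that the depths of $H^*_G(M)$ and $H^*_T(M)$ over $R$ and $S$ respectively agree. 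Since $S$ is finite over $R$, the Krull dimensions of $H^*_G(M)$ and $H^*_T(M)$ likewise coincide. Hence $H^*_G(M)$ is Cohen-Macaulay over $R$ if and only if $H^*_T(M)$ is Cohen-Macaulay over $S$, and combining all three equivalences yields the proposition.

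I expect the standard commutative-algebra transfer in the third paragraph to be the main conceptual step; everything else is essentially formal once Proposition \ref{prop:GTeqformal} and \cite[Proposition 6.4]{GT} are in hand. No deep geometric input is needed beyond the conjugacy of maximal tori.
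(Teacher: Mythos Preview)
Your proof is correct and follows the same overall reduction as the paper: use Proposition~\ref{prop:GTeqformal} together with \cite[Proposition 6.4]{GT} to reduce to the torus case, then translate the two $T$-conditions into $G$-conditions. The paper's proof of this proposition is a one-liner that leaves the Cohen--Macaulay transfer between $G$ and $T$ implicit; that transfer is established separately afterwards as Proposition~\ref{prop:CMforGT}. You instead supply the transfer inline, and with a different argument: rather than the paper's asymmetric treatment in Proposition~\ref{prop:CMforGT} (an averaging/direct-summand argument via Lemma~\ref{lem:freenessofinvmodule} for one direction, and \cite[Proposition IV.B.12]{Serre} for the other), you exploit that tensoring a minimal graded free resolution of $H^*_G(M)$ over $R=S(\mfg^*)^G$ with the free $R$-module $S=S(\mft^*)$ yields a minimal free resolution of $H^*_T(M)$ over $S$, so the projective dimensions agree, and then invoke Auslander--Buchsbaum to equate depths. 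This is a clean, symmetric alternative to the paper's Proposition~\ref{prop:CMforGT}; the only point worth making explicit is that minimality is preserved because the graded map $R\to S$ sends the irrelevant ideal into the irrelevant ideal, which guarantees equality (not just an inequality) of projective dimensions.
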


We will prove below in Proposition \ref{prop:CMforGT} a statement analogous to Proposition \ref{prop:GTeqformal} for Cohen-Macaulay actions, but before doing so we need an algebraic lemma.

Let $A$ be an $S(\mft^*)$-module. A representation of the Weyl group $W=N_G(T)/T$ of $G$ on the vector space $A$ is said to be an action on the $S(\mft^*)$-module $A$ if for all $f\in S(\mft^*), a\in A$ and $\gamma\in W$ we have $\gamma(f\cdot a)=(\gamma\cdot f)(\gamma\cdot a)$, where the $W$-action on $S(\mft^*)$ is the natural one. 
\begin{lem}\label{lem:freenessofinvmodule} If a subgroup $W'$ of the Weyl group $W$ of $G$ acts on a Cohen-Macaulay $S(\mft^*)$-module $A$, then the subspace of $W'$-invariant elements $A^{W'}$ is a Cohen-Macaulay $S(\mft^*)^{W'}$-module. If $W'=W$ and $A$ is free over $S(\mft^*)$, then $A^W$ is free over $S(\mft^*)^W$.
\end{lem}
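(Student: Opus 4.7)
The plan is to combine a Reynolds-type averaging argument with a change of base ring from $S(\mft^*)$ to $S(\mft^*)^{W'}$. Since $|W'|$ is invertible in our coefficient field, I would first introduce the averaging operator
\[
R\colon A\to A^{W'},\qquad a\mapsto \frac{1}{|W'|}\sum_{\gamma\in W'}\gamma\cdot a,
\]
which is $S(\mft^*)^{W'}$-linear (directly from the compatibility $\gamma(f\cdot a)=(\gamma\cdot f)(\gamma\cdot a)$) and splits the inclusion $A^{W'}\into A$. This realises $A^{W'}$ as a direct summand of $A$ in the category of $S(\mft^*)^{W'}$-modules.

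To establish the first claim I would then proceed in two steps. First, I would show that $A$ itself is Cohen-Macaulay when viewed as an $S(\mft^*)^{W'}$-module. By the Hilbert-Noether theorem, $S(\mft^*)$ is a finitely generated $S(\mft^*)^{W'}$-module, and standard change-of-rings results (see \cite{BrunsHerzog}) then yield $\depth_{S(\mft^*)^{W'}} A=\depth_{S(\mft^*)} A$ and $\dim_{S(\mft^*)^{W'}} A=\dim_{S(\mft^*)} A$, computed with respect to the unique graded maximal ideals. Second, I would appeal to the elementary fact that a direct summand of a Cohen-Macaulay module is Cohen-Macaulay: since $\depth(N\oplus P)=\min(\depth N,\depth P)$ while $\dim(N\oplus P)=\max(\dim N,\dim P)$, the inequality $\depth\leq\dim$ forces $\depth N=\dim N$ and $\depth P=\dim P$ whenever $N\oplus P$ is Cohen-Macaulay. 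Applied to $A=A^{W'}\oplus\ker R$, this gives that $A^{W'}$ is Cohen-Macaulay over $S(\mft^*)^{W'}$.

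For the second part I would specialise to $W'=W$ and invoke the Chevalley-Shephard-Todd theorem: since $W$ acts on $\mft^*$ as a reflection group, $S(\mft^*)^W$ is itself a graded polynomial ring and $S(\mft^*)$ is a finite free module over $S(\mft^*)^W$. Hence if $A$ is free over $S(\mft^*)$, it is automatically free, and in particular projective, over $S(\mft^*)^W$. The Reynolds splitting then exhibits $A^W$ as a direct summand of a projective $S(\mft^*)^W$-module, so $A^W$ is projective, and therefore free, because finitely generated graded projective modules over a graded polynomial ring are free.

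The main obstacle I foresee is the first step of the first claim, namely checking that Cohen-Macaulayness transfers along the finite extension $S(\mft^*)^{W'}\subset S(\mft^*)$ in the \emph{graded} setting; the underlying depth/dimension comparison is classical, but some references phrase it only in the local case, so a little care is required to apply it to our graded polynomial rings. Everything else then falls out formally from the Reynolds decomposition together with classical invariant theory.
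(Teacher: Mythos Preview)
Your argument is correct, and for the first assertion it is essentially identical to the paper's: both use the Reynolds averaging operator to realise $A^{W'}$ as an $S(\mft^*)^{W'}$-direct summand of $A$, transfer the Cohen-Macaulay property of $A$ along the finite extension $S(\mft^*)^{W'}\subset S(\mft^*)$, and conclude that a direct summand of a Cohen-Macaulay module is Cohen-Macaulay (the paper phrases this last step via the $\Ext$-characterisation, you via the $\depth/\dim$ behaviour of direct sums --- same content).

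For the second assertion you take a genuinely different, and somewhat more elementary, route. The paper reuses the first part: having shown $A^W$ is Cohen-Macaulay over the polynomial ring $S(\mft^*)^W$ with $\depth = \dim = \dim\mft$, it applies the graded Auslander-Buchsbaum formula to conclude that $A^W$ has projective dimension zero, hence is free. You instead observe that $S(\mft^*)$ is itself free over $S(\mft^*)^W$ (another consequence of Chevalley-Shephard-Todd), so $A$ is free over $S(\mft^*)^W$ outright; the Reynolds splitting then exhibits $A^W$ as a graded direct summand of a free module, and graded projectives over a connected graded polynomial ring are free. Your argument avoids Auslander-Buchsbaum entirely at the cost of invoking the freeness of $S(\mft^*)$ over $S(\mft^*)^W$; the paper's argument has the structural advantage that the free case is deduced directly from the Cohen-Macaulay case already established.
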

\begin{proof}
As $S(\mft^*)$ is finitely generated over $S(\mft^*)^{W'}$, we have that $A$ is Cohen-Macaulay over $S(\mft^*)^{W'}$ of dimension $\dim T$.
The map 
\[p:A\to A^{W'}, \qquad a\mapsto \frac1 {|W'|}\sum_{\gamma \in W'} \gamma a\]
is $S(\mft^*)^{W'}$-linear and splits the inclusion $i:A^{W'}\into A$ in the category of $S(\mft^*)^{W'}$-modules. In other words, $A^{W'}$ is a direct summand of a Cohen-Macaulay module, hence itself Cohen-Macaulay (This last fact can be seen easily from the characterisation via $\Ext$-groups).

For the case $W'=W$ and $A$ being free over $S(\mft^*)$ note that $S(\mft^*)^W$ is again a polynomial ring by the Chevalley-Shephard-Todd-Bourbaki Theorem, see e.g.~\cite[Section 18-1]{Kane}, in $\dim \mft$ indeterminates. As explained in the proof of \cite[Proposition 6.4]{GT}, a graded version of the Auslander-Buchsbaum theorem, together with the fact that we have shown above that $A^W$ is Cohen-Macaulay of $\dim T$, implies that $A^W$ is free over $S(\mft^*)^W$.
\end{proof}

\begin{prop} \label{prop:CMforGT} If $T$ is a maximal torus in $G$, then a $G$-action on $M$ is Cohen-Macaulay if and only if the induced $T$-action is Cohen-Macaulay.
\end{prop}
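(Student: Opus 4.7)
My plan is to prove the two implications separately, using the two general pieces we now have in hand: the identification $H^*_T(M)=H^*_G(M)\otimes_{S(\mfg^*)^G}S(\mft^*)$ together with $H^*_G(M)=H^*_T(M)^W$ from \cite[Theorem C.35]{GGK}, the fact (Chevalley-Shephard-Todd) that $S(\mft^*)$ is a finite free module over $S(\mft^*)^W=S(\mfg^*)^G$, and Lemma \ref{lem:freenessofinvmodule}.

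For the direction ``$G$ Cohen-Macaulay $\Rightarrow$ $T$ Cohen-Macaulay'', I would argue as follows. Assume $H^*_G(M)$ is Cohen-Macaulay over $S(\mfg^*)^G$. Because $S(\mft^*)$ is a free $S(\mfg^*)^G$-module of finite rank $|W|$, the identification of $S(\mft^*)$-modules
\[
H^*_T(M)=H^*_G(M)\otimes_{S(\mfg^*)^G}S(\mft^*)
\]
is, after forgetting the $S(\mft^*)$-action, just a direct sum of $|W|$ copies of $H^*_G(M)$ as an $S(\mfg^*)^G$-module. Hence $H^*_T(M)$ is Cohen-Macaulay as an $S(\mfg^*)^G$-module of dimension $\dim T$. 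Since the inclusion $S(\mfg^*)^G\into S(\mft^*)$ is a finite ring extension, depth and Krull dimension of $H^*_T(M)$ agree whether we regard it over $S(\mfg^*)^G$ or over $S(\mft^*)$, so $H^*_T(M)$ is Cohen-Macaulay over $S(\mft^*)$, i.e.\ the $T$-action is Cohen-Macaulay.

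For the converse, I would apply Lemma \ref{lem:freenessofinvmodule} directly with $W'=W$ and $A=H^*_T(M)$: the Weyl group acts on $H^*_T(M)$ compatibly with the $S(\mft^*)$-module structure, and the assumption is exactly that $A$ is Cohen-Macaulay. The lemma therefore yields that $A^W=H^*_T(M)^W$ is Cohen-Macaulay over $S(\mft^*)^W$. Using $H^*_T(M)^W=H^*_G(M)$ and $S(\mft^*)^W=S(\mfg^*)^G$, this says exactly that the $G$-action is Cohen-Macaulay.

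The only technical point that needs care is the change-of-ring assertion in the forward direction: that a finitely generated module over a graded polynomial ring $S(\mft^*)$ is Cohen-Macaulay if and only if it is Cohen-Macaulay over the subring $S(\mfg^*)^G$ of invariants, which is itself a graded polynomial ring over which $S(\mft^*)$ is finite and free. I expect this to be the main (if modest) obstacle: one must invoke the standard fact that for a finite extension of graded Noetherian rings both Krull dimension and depth of a module are preserved, parallel to the use of this fact already made in Lemma \ref{lem:freenessofinvmodule}. Once this is acknowledged, both implications are one-line consequences of the structural results collected earlier in this section.
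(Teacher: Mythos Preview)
Your proof is correct and follows essentially the same route as the paper's: Lemma \ref{lem:freenessofinvmodule} handles the direction $T\Rightarrow G$, while for $G\Rightarrow T$ both you and the paper use the tensor identification $H^*_T(M)=H^*_G(M)\otimes_{S(\mft^*)^W}S(\mft^*)$, freeness of $S(\mft^*)$ over $S(\mft^*)^W$, and then the change-of-ring step (the paper cites a graded version of \cite[Proposition IV.B.12]{Serre} for this). One small inaccuracy: the module dimension of $H^*_T(M)$ over $S(\mfg^*)^G$ is the maximal isotropy rank $b$, not $\dim T$ in general, but your argument nowhere actually uses the specific value, so this does not affect its validity.
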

\begin{proof}
If the $T$-action is Cohen-Macaulay, then $H^*_G(M)=H^*_T(M)^W$ is Cohen-Macaulay by Lemma \ref{lem:freenessofinvmodule}. Conversely assume that $H^*_G(M)$ is Cohen-Macaulay. We have $H^*_T(M)=H^*_G(M)\otimes_{S(\mft^*)^W} S(\mft^*)$, and because $S(\mft^*)$ is a free $S(\mft^*)^W$-module, $H^*_T(M)$ is a Cohen-Macaulay module over $S(\mft^*)^W$. But as $S(\mft^*)$ is finitely generated over $S(\mft^*)^W$, it follows from a graded version of \cite[Proposition IV.B.12]{Serre} that $H^*_T(M)$ is also Cohen-Macaulay over $S(\mft^*)$. \end{proof}

Consider now an action of a torus $T$ on $M$. Denoting by $b$ the maximal occuring dimension of a $T$-isotropy algebra, it is proven in \cite[Proposition 5.1]{FranzPuppe2003} that 
\begin{lem}\label{lem:dimension} $\dim_{S(\mft^*)}H^*_T(M)=b$.
\end{lem}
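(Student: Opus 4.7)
The plan is to compute $\dim_{S(\mft^*)} H^*_T(M)$ by pinning down the support of the module in $\operatorname{Spec} S(\mft^*)$. Primes of $S(\mft^*)$ correspond bijectively to irreducible subvarieties of $\mft$, and for any connected subtorus $K\subset T$ with Lie algebra $\mfk\subset \mft$ I write $\mathfrak{p}_K=\ker(S(\mft^*)\onto S(\mfk^*))$ for the prime cutting out the linear subvariety $\mfk$; this prime satisfies $\dim S(\mft^*)/\mathfrak{p}_K=\dim\mfk$. The Krull dimension of the module coincides with $\max\dim S(\mft^*)/\mathfrak{p}$ over $\mathfrak{p}\in \operatorname{Supp} H^*_T(M)$, so everything reduces to identifying the support as the union $\bigcup_i \mfk_i\subset \mft$, where $\mfk_1,\dots,\mfk_N$ are the (finitely many, by compactness of $M$) Lie algebras of the connected identity components of the $T$-isotropies.

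The inclusion $\operatorname{Supp} H^*_T(M)\subseteq \bigcup_i \mfk_i$ is the technical heart of the argument and is the step I expect to be the main obstacle. I would establish it by induction on the number of orbit-type strata, using an equivariant tubular neighbourhood to split off a top stratum, a Mayer--Vietoris long exact sequence for the resulting cover, and the model computation $H^*_T(T/K)=H^*(BK)=S(\mfk^*)$, whose annihilator cuts out exactly $\mfk$. Granted this inclusion, any irreducible subvariety $V(\mathfrak{p})$ in the support must be contained in a single $\mfk_i$, so $\dim S(\mft^*)/\mathfrak{p}\leq\dim\mfk_i\leq b$, giving the upper bound $\dim_{S(\mft^*)} H^*_T(M)\leq b$. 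This step is what Franz--Puppe carry out in their Proposition 5.1.

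For the matching lower bound I pick $p_0\in M$ with $\dim\mft_{p_0}=b$, set $\mfk=\mft_{p_0}$, and let $K\subset T$ be the identity component of $T_{p_0}$; it is enough to exhibit $\mathfrak{p}_K$ in the support, since $\dim S(\mft^*)/\mathfrak{p}_K=\dim\mfk=b$. By the Borel--Hsiang localisation theorem, $(H^*_T(M))_{\mathfrak{p}_K}\isom (H^*_T(M^K))_{\mathfrak{p}_K}$, and $M^K$ is non-empty as it contains $p_0$. A Lie-algebra splitting $\mft=\mfk\oplus \mft/\mfk$ yields a decomposition $T=K\times(T/K)$, and since $K$ acts trivially on $M^K$ the Borel construction factors to give $H^*_T(M^K)\isom S(\mfk^*)\tensor H^*_{T/K}(M^K)$ as $S(\mft^*)$-modules. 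The non-zero submodule $S(\mfk^*)\tensor H^0(M^K)$ is annihilated by $\mathfrak{p}_K$, and its localisation at $\mathfrak{p}_K$ is essentially the field of fractions of $S(\mfk^*)$, hence non-zero. Therefore $\mathfrak{p}_K\in\operatorname{Supp} H^*_T(M)$, completing the proof.
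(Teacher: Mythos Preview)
Your proposal is correct. The paper itself does not prove this lemma at all: it simply records the statement and attributes it to \cite[Proposition~5.1]{FranzPuppe2003}, which you also cite. So in a strict sense you are supplying more than the paper does.

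That said, the paper later sketches an independent argument in Remark~\ref{abcde} that is worth comparing with yours. Both arguments proceed by identifying $\supp H^*_T(M)$ with the union $\bigcup_i \mfk_i$ of isotropy algebras, from which the dimension formula is immediate. The difference lies in how the support inclusion is obtained. You propose an induction on orbit-type strata via equivariant tubular neighbourhoods and Mayer--Vietoris, reducing to the model computation $H^*_T(T/K)\isom S(\mfk^*)$. The paper instead invokes Duflot's short exact sequences \eqref{eqn:sesDuflot}, which come from the Thom--Gysin construction for the closed inclusions $M_{(i)}\into M\setminus M_{i+1}$, and reads off the associated primes directly (following \cite{Duflot2}). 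Duflot's filtration by isotropy \emph{rank} is somewhat cleaner than an orbit-type induction because each $H^*_T(M_{(i)})$ is already a free module over a polynomial subring of the right dimension; your Mayer--Vietoris route works but requires a bit more bookkeeping with long exact sequences and non-compact pieces. For the lower bound, your use of Borel--Hsiang localisation and the splitting $H^*_T(M^K)\isom S(\mfk^*)\tensor H^*_{T/K}(M^K)$ is entirely standard and correct; note that a closed connected subgroup of a torus is always a direct factor, so the splitting $T\isom K\times(T/K)$ you invoke is legitimate.
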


Then we have the following equivalent characterisations of Cohen-Macaulay actions:

\begin{lem}\label{lem:CMconditions}
The following conditions are equivalent for any $T$-action on $M$:
\begin{enumerate}
\item The $T$-action is Cohen-Macaulay.
\item $\depth_{S(\mft^*)} H^*_T(M)\geq b$.
\item $\depth_{S(\mft^*)} H^*_T(M)=b$.
\item $H^*_T(M)$ is a Cohen-Macaulay ring.
\end{enumerate}
\end{lem}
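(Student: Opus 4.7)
The key remark is that because $H^*(M)$ is finite-dimensional, the Leray--Serre spectral sequence of the Borel fibration $M\hookrightarrow ET\times_T M\to BT$ shows that $H^*_T(M)$ is a finitely generated graded module (and in fact a finitely generated graded algebra) over the polynomial ring $S(\mft^*)=H^*(BT)$. This places us in the standard setting of graded commutative algebra, so the usual depth and dimension tools apply directly.

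The equivalences between (1), (2), and (3) are essentially formal once Lemma \ref{lem:dimension} is available. By definition, $H^*_T(M)$ is Cohen-Macaulay over $S(\mft^*)$ if and only if $\depth_{S(\mft^*)} H^*_T(M)=\dim_{S(\mft^*)} H^*_T(M)$, and Lemma \ref{lem:dimension} identifies this dimension with $b$, which yields (1) $\iff$ (3). The standard inequality $\depth\leq\dim$ for a finitely generated graded module over a Noetherian graded ring, together with Lemma \ref{lem:dimension}, gives $\depth_{S(\mft^*)} H^*_T(M)\leq b$, so conditions (2) and (3) coincide.

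The equivalence (1) $\iff$ (4) compares Cohen-Macaulayness over $S(\mft^*)$ with Cohen-Macaulayness of $H^*_T(M)$ as a ring, i.e.\ over itself with respect to its own irrelevant maximal ideal $\mfm$. Since $H^*_T(M)$ is a finite $S(\mft^*)$-algebra, integral dependence forces $\dim H^*_T(M)=\dim_{S(\mft^*)} H^*_T(M)=b$ for the Krull dimension of the ring. For the depth, the expanded ideal $S(\mft^*)^+\cdot H^*_T(M)$ and $\mfm$ have the same radical in $H^*_T(M)$ by integrality, and depth depends only on the radical of the acting ideal; hence $\depth_{S(\mft^*)} H^*_T(M)=\depth_{\mfm} H^*_T(M)$, and the two Cohen-Macaulay conditions agree.

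The main obstacle is not conceptual but one of correctly invoking graded versions of standard local-commutative-algebra statements — the inequality $\depth\leq\dim$, the preservation of Krull dimension under finite extensions, and the invariance of depth under replacing an ideal by one with the same radical — for which \cite{BrunsHerzog} and the graded discussion in \cite[Section 5]{GT} are the natural references. No part of the argument is geometrically deep; all of the content is packaged into Lemma \ref{lem:dimension} and the finite generation of $H^*_T(M)$ over $S(\mft^*)$.
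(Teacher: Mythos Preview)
Your proof is correct and follows essentially the same approach as the paper: the equivalence of (1), (2), and (3) comes from Lemma~\ref{lem:dimension} together with $\depth\leq\dim$, and the equivalence with (4) comes from $H^*_T(M)$ being a finite $S(\mft^*)$-algebra. The only difference is cosmetic: where the paper simply invokes a graded version of \cite[Proposition IV.B.12]{Serre} for (1)$\iff$(4), you unpack the content of that proposition (equality of dimensions under a finite extension, and invariance of depth under passage to an ideal with the same radical).
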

\begin{proof}
The equivalence of (1), (2) and (3) follows from Lemma \ref{lem:dimension} because the depth is bounded from above by the dimension.  The equivalence to (4) follows from a graded version of \cite[Proposition IV.B.12]{Serre} since  $H^*_T(M)$ is a finitely generated $S(\mft^*)$-module.
\end{proof}


\begin{rem}
In contrast to the case of equivariantly formal actions, the Leray-Serre spectral sequence of the fibration $M\to ET \times_T M\to BT$ associated to a Cohen-Macaulay action can have non-zero differentials in arbitrary high degree. For instance, consider the $T^n$-action on $S^{2n+1}$ given by 
$$
(t_0,\ldots,t_{n-1})\cdot (z_0,\ldots,z_n)=(t_0z_0,\ldots,t_{n-1}z_{n-1},z_n).
$$
This action is Cohen-Macaulay (e.g.~by \cite[Remark 6.3]{GT}, as there is a circle $S^1\subset T^n$ acting freely such that $T^n/S^1$ acts equivariantly formally on $S^{2n+1}/S^1=\CC P^n$) but not equivariantly formal as it does not have any fixed points. Therefore, the spectral sequence does not collapse at the $E_2$-term $S(\mft^*)\otimes H^*(S^{2n+1})$. Hence, the only possibly nonzero differential $d_{n+1}$ is in fact nonzero.
\end{rem}

\begin{que}
It follows easily from the characterisation of equivariant formality via the surjectivity of $H^*_T(M)\to H^*(M)$ that equivariant formality is inherited by arbitrary subtori of $T$. It is not clear to us whether this still holds true for Cohen-Macaulay actions. By \cite[Remark 6.3]{GT} the Cohen-Macaulay property passes over to subtori that contain a $b$-dimensional torus acting locally freely. From an algebraic point of view, the following proposition shows that it is true for subtori that are induced by $H^*_T(M)$-regular elements in $\mft^*$.
\end{que}

\begin{prop} Let $M$ be a compact $T$-manifold, and $x\in \mft^*\subset S(\mft^*)$ be corresponding to a codimension $1$ subtorus $T'\subset T$, i.e., $\ker x = \mft'$. If $x$ is $H^*_T(M)$-regular then there is an exact sequence
\begin{equation}\label{eq:sesregularelement}
0\to H^*_T(M)\overset{\cdot x}{\to} H^*_T(M)\to H^*_{T'}(M)\to 0
\end{equation}
of $S(\mft^*)$-modules, where $H^*_{T'}(M)$ is regarded as an $S(\mft^*)$-module via the natural map $S(\mft^*)\to S({\mft'}^*)$.
In particular the $T$-action is Cohen-Macaulay if and only if the $T'$-action is Cohen-Macaulay.
\end{prop}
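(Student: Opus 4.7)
Plan: I would realise the claimed sequence as a piece of the Gysin sequence of a principal $S^1$-bundle, and then deduce the Cohen-Macaulay equivalence by a short commutative-algebra argument.

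First, since $T'$ acts freely on the contractible space $ET$, one may use $ET$ as a model for $ET'$, so that $H^*_{T'}(M)=H^*(ET\times_{T'}M)$. The quotient map $ET\times_{T'}M\to ET\times_T M$ is then a principal $T/T'\cong S^1$-bundle, whose Gysin sequence reads
\[
\cdots\to H^{k-2}_T(M)\xrightarrow{\cdot e} H^k_T(M)\to H^k_{T'}(M)\xrightarrow{\partial} H^{k-1}_T(M)\xrightarrow{\cdot e}\cdots
\]
The Euler class $e\in H^2(ET\times_T M)$ is pulled back from the universal bundle $BT'\to BT$, and by naturality along $T\to T/T'$ matches, under the standard identification $H^2(BT)=\mft^*$, our element $x$. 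Regularity of $x$ on $H^*_T(M)$ means that $\cdot x$ is injective in every degree; the connecting map $\partial$ then lands in $\ker(\cdot x)=0$, so the long exact sequence degenerates into the short exact sequences \eqref{eq:sesregularelement}.

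For the in-particular statement, note that $x$ annihilates $H^*_{T'}(M)\cong H^*_T(M)/xH^*_T(M)$, so its $S(\mft^*)$-module structure factors through $S(\mft^*)/(x)=S(\mft'^*)$; hence both depth and Krull dimension of $H^*_{T'}(M)$ over $S(\mft^*)$ coincide with the corresponding invariants over $S(\mft'^*)$. Regularity of $x$ yields
\[
\depth_{S(\mft^*)} H^*_T(M) \;=\; \depth_{S(\mft'^*)}H^*_{T'}(M)+1
\]
and analogously
\[
\dim_{S(\mft^*)} H^*_T(M)\;=\;\dim_{S(\mft'^*)}H^*_{T'}(M)+1,
\]
so by Lemma \ref{lem:CMconditions} the Cohen-Macaulay condition $\depth=\dim$ holds for the $T$-action if and only if it holds for the $T'$-action. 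The main obstacle is a clean identification of the topological Euler class with the algebraic element $x$, including the correct sign and normalisation conventions; once that bookkeeping is handled, everything else is formal from the Gysin sequence and standard Cohen-Macaulay algebra.
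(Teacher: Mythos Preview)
Your approach is correct and genuinely different from the paper's. The paper works in the Cartan model: it writes down the chain-level short exact sequence
\[
0\to \Omega(M)^T\otimes S(\mft^*)\xrightarrow{\cdot x}\Omega(M)^T\otimes S(\mft^*)\to \Omega(M)^T\otimes S({\mft'}^*)\to 0,
\]
passes to the long exact sequence in cohomology, and uses regularity of $x$ to split it into short exact pieces. The one nontrivial step there is identifying the cohomology of the quotient complex $(\Omega(M)^T\otimes S({\mft'}^*),d_{T'})$ with $H^*_{T'}(M)$, which the paper handles by a spectral-sequence comparison with the genuine $T'$-Cartan complex $(\Omega(M)^{T'}\otimes S({\mft'}^*),d_{T'})$, using that $T$ acts trivially on $H^*(M)$.

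Your Gysin-sequence argument bypasses the Cartan model entirely and works directly in the Borel construction; it is cleaner and makes the topological origin of the sequence transparent. The only caveat, which you already flag, is that the Euler class of $BT'\to BT$ is a nonzero element of $\mft^*$ vanishing on $\mft'$ and hence equals $x$ only up to a nonzero scalar; since regularity and the resulting exact sequence are insensitive to scaling, this is harmless. For the Cohen--Macaulay equivalence the paper simply cites the standard fact that quotienting by a regular element preserves the CM property in both directions, while you spell out the depth and dimension drops explicitly; either is fine.
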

\begin{proof}
As we are proving this only for compact $T$-manifolds, we may use the Cartan model (see \cite{GuilleminSternberg}) to compute equivariant cohomology.
Since $ S(\mft^*)/x S(\mft^*)=S({\mft'}^*)$ multiplication by $x$ induces an exact sequence 
\[0\to \Omega(M)^T\tensor S(\mft^*)\overset{\cdot x}{\to} \Omega(M)^T\tensor S(\mft^*)\to \Omega(M)^T\tensor S({\mft'}^*)\to 0.\]
By assumption multiplication with $x$ is injective on $H^*_T(M)$. Thus the long exact sequence in cohomology splits into short exact sequences and we are done if we can identify the cohomology of $(\Omega(M)^T\tensor S({\mft'}^*), d_{T'})$ with $H^*_{T'}(M)$.

There is an inclusion $(\Omega(M)^T\tensor S({\mft'}^*), d_{T'})\into(\Omega(M)^{T'}\tensor S({\mft'}^*), d_{T'}) $ of bigraded complexes which induces an isomorphism on the $E_1$ term of the corresponding spectral sequences \cite[p.~70]{GuilleminSternberg}
\[ \left(H^{p-q}(M)\tensor S({\mft'}^*)\right)^{T}=H^{p-q}(M)\tensor S({\mft'}^*)=\left(H^{p-q}(M)\tensor S({\mft'}^*)\right)^{T'}\]
because  $T$ acts trivially on $H^*(M)$. Since the spectral sequences converge to the cohomologies of the respective complexes we see that the cohomologies of the complexes  are isomorphic. Thus, \eqref{eq:sesregularelement} is exact.

Because $x$ is a regular element, $H^*_{T}(M)$ is Cohen-Macaulay if and only if $H^*_{T'}(M)$ is Cohen-Macaulay.
\end{proof}


\section{Some notations}\label{notations}
Consider the action of a compact connected Lie group $G$ on a compact manifold $M$. In order to simplify notation we will assume that the action is almost effective. For $p\in M$ we denote by $\mfg_p$ the Lie algebra of the isotropy group $G_p$ of $p$. Following the notation in \cite{Duflot} we set 
\begin{gather*}
 M_{i,G}:=\{p\in M\mid \rk \mfg_p\geq i\},\\
M_{(i),G}:=\{p\in M\mid \rk \mfg_p= i\}=M_{i,G}\setminus M_{i+1,G}.
\end{gather*}
In Section \ref{sec:lowrank} we will compare $M_{i,G}$ and $M_{i,T}$, where $T$ is a maximal torus in $G$, which makes it necessary to include the acting group into the notation. If we are dealing with only one action, however, we will suppress the subscript. We have inclusions
\[
M=M_0\supset M_1\supset \dots \supset M_b \supset M_{b+1}=\varnothing,
\]
where $b$ is the maximal occuring rank of an isotropy algebra.  For any Lie subalgebra $\mfk\subset \mfg$ we set 
\[ 
M^\mfk=\{p\in M\mid \mfg_p\supset\mfk\}
\]
which coincides with the fixed point set of the connected Lie subgroup $K\subset G$ with Lie algebra $\mfk$. For any point $p\in M^\mfk$ we denote by $M^{\mfk,p}$ the connected component of $M^{\mfk}$ containing $p$.  

If $G=T$ is a torus, then each $M^\mfk$ is $T$-invariant. The regular stratum of an action of a torus $T$ is by definition the set of points $p$ such that $\dim \mft_p$ is minimal among the dimensions of the occurring isotropy algebras. This open and dense subset of $M$ will be denoted by $\Mr$. Note that $(M^{\mft_p})_{\reg}=\{q\in M\mid \mft_q=\mft_p\}$, and that the manifold $M$ is stratified by the $(M^{\mft_p,p})_{\reg}$. 

The infinitesimal bottom stratum of the $T$-action is defined as the union of the closed strata, that is, of those $M^{\mft_p,p}$ for which $M^{\mft_p,p}=(M^{\mft_p,p})_{\reg}$. Clearly, it always contains the fixed point set $M^T$.

\section{The torsion module for $G$-actions}\label{sec:injectivity}

Let $G$ be a compact connected Lie group acting on a compact manifold $M$, and let $T$ be a maximal torus of $G$. By Proposition \ref{prop:GTeqformal}  the $G$-action on $M$ is equivariantly formal if and only if the $T$-action is equivariantly formal. In this section we will investigate how equivariant injectivity of the two actions correspond. 


We say that a subalgebra $\mfh\subset \mfg$ has \emph{maximal rank} if $\rk \mfh=\rk \mfg$. The set of points whose isotropy algebra has maximal rank will be denoted by $\Mmax=\{p\in M\mid \rk \mfg_p=\rk \mfg\}$. 
\begin{rem}
It is not difficult to find examples of $G$-actions for which $\Mmax$ is not a smooth submanifold of $M$.  An easy concrete example is the natural $\mathrm{SO}(3)$-action on ${\mathrm{SU}}(3)/{\mathrm{SO}(3)}$ by left translations: a look at the isotropy representation at the origin shows that $\Mmax$ is not smooth at that point.

On the other hand, $\Mmax$ is smooth if $G$ is a torus, as then $\Mmax$ coincides with the fixed point set. Another important example in which it is smooth is the $G$-action on itself by conjugation as then $G_{\max}=G$.
\end{rem}
We will show that for general $G$, $\Mmax$ in some ways behaves similarly to the fixed point set in case of a torus action. 
In \cite[Remark C.72]{GGK} the question was posed whether $H^*_G(\Mmax)$ is always a torsion-free $S(\mfg^*)^G$-module. We will answer this question positively below in Corollary \ref{cor:M_max equiv formal}.

\begin{lem} \label{lem:G/N acyclic} Let $K$ be a compact Lie group and $T$ be a maximal torus in the identity component $K^0$. Then $K/N_K(T)$ is $\RR$-acyclic, i.e.~$H^n(K/N_K(T))$ vanishes for $n>0$ and is equal to $\RR$ for $n=0$.
\end{lem}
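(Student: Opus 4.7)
My plan is to reduce to the case that $K$ is connected, then invoke the transfer for a regular finite cover, and finally apply Borel's description of $H^*(K/T;\RR)$ combined with the classical computation of the $W$-invariants in the coinvariant algebra.

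First I would verify that the inclusion $K^0\hookrightarrow K$ induces a homeomorphism $K^0/N_{K^0}(T)\xrightarrow{\isom} K/N_K(T)$. Injectivity follows from $N_K(T)\cap K^0=N_{K^0}(T)$, which is immediate. Surjectivity amounts to $K=K^0\cdot N_K(T)$: for $k\in K$ the torus $kTk^{-1}$ still lies in the normal subgroup $K^0$ and is maximal there, so the conjugacy of maximal tori in $K^0$ produces $k_0\in K^0$ with $k_0k\in N_K(T)$. Both sides are compact Hausdorff, so the continuous bijection is a homeomorphism, and it suffices to treat the connected case $K=K^0$.

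Assuming $K$ is connected, the Weyl group $W=N_K(T)/T$ acts freely on $K/T$ by right translations with orbit space $K/N_K(T)$. Since $|W|$ is invertible in $\RR$, the standard transfer for this regular finite cover yields
\[
H^*(K/N_K(T);\RR)\isom H^*(K/T;\RR)^W.
\]

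To conclude I would invoke Borel's theorem identifying $H^*(K/T;\RR)$, as a graded $W$-module, with the coinvariant algebra $S(\mft^*)/(S(\mft^*)^W_+)$. Since $S(\mft^*)$ is free of rank $|W|$ over $S(\mft^*)^W$ by the Chevalley-Shephard-Todd-Bourbaki theorem (already used in the proof of Lemma \ref{lem:freenessofinvmodule}), the coinvariant algebra carries the regular representation of $W$, so its $W$-invariants are one-dimensional; as the class of $1$ already realises this invariant in degree $0$, nothing is left in positive degrees. The main technical point is really the first step — ensuring that the transfer argument still applies when $K$ is disconnected — which is handled by the homeomorphism reduction.
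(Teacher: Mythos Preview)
Your reduction to the connected case via the homeomorphism $K^0/N_{K^0}(T)\to K/N_K(T)$ is exactly the argument the paper gives, down to the use of conjugacy of maximal tori in $K^0$ to establish surjectivity. For the connected case, however, the paper does not argue at all: it simply cites \cite[Section III.1, Lemma (1.1)]{Hsiang} as a black box. You instead supply a proof, passing through the transfer isomorphism $H^*(K/N_K(T);\RR)\isom H^*(K/T;\RR)^W$, Borel's identification of $H^*(K/T;\RR)$ with the coinvariant algebra $S(\mft^*)/(S(\mft^*)^W_+)$ as a $W$-module, and Chevalley's theorem that this algebra affords the regular representation (or, more directly, that the tensor decomposition $S(\mft^*)\isom S(\mft^*)^W\otimes C$ forces $C^W$ to be one-dimensional). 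This is correct and more self-contained than the paper's citation, at the cost of invoking Borel's theorem on $H^*(K/T)$; it also ties in nicely with the Chevalley--Shephard--Todd result already used elsewhere in the paper.
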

\begin{proof} For connected $K$ this is proven in \cite[Section III.1, Lemma (1.1)]{Hsiang}. For disconnected $K$, we reduce to this case by showing that the natural map $K^0/N_{K^0}(T)\to K/N_K(T)$ is an isomorphism. Injectivity is clear as $N_{K^0}(T)=K^0\cap N_K(T)$, so we need to show that this map is surjective. If $k\in K$ is an arbitrary element, then $kTk^{-1}$ is a maximal torus in $K^0$, so there exists $g\in K^0$ with $gkT(gk)^{-1}=T$, hence $gk\in N_K(T)$. Thus $kN_K(T)=k(gk)^{-1}N_K(T)=g^{-1}N_K(T)$ is the image of $g^{-1}N_{K^0}(T)$. 
\end{proof}

For any maximal torus $T$ of $G$ we have $\Mmax=G\cdot M^T$.

\begin{prop} \label{prop:eqcohomnormalizer} For any $G$-action on $M$ and any maximal torus $T\subset G$ we have
\[
H^*_G(\Mmax)=H^*_{N_G(T)}(M^T).
\]
\end{prop}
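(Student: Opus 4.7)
The plan is to construct a surjective $G$-equivariant map
\[
f: G \times_{N_G(T)} M^T \to M_{\max}, \qquad [g, p] \mapsto g p,
\]
with $\RR$-acyclic fibers, and then to pass to Borel constructions. Since $EG \times_G (G \times_{N_G(T)} M^T) \cong EG \times_{N_G(T)} M^T$ canonically, and $EG$ is a contractible free $N_G(T)$-space, the right-hand side is a model for the Borel construction computing $H^*_{N_G(T)}(M^T)$. Meanwhile, the induced map $\tilde f : EG \times_{N_G(T)} M^T \to EG \times_G M_{\max}$ still has $\RR$-acyclic fibers, so Vietoris--Begle yields the desired isomorphism $H^*_G(M_{\max}) \cong H^*_{N_G(T)}(M^T)$.

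Surjectivity of $f$ is just the identity $M_{\max} = G \cdot M^T$: if $q \in M_{\max}$, its isotropy has maximal rank so $G_q^0$ contains some maximal torus of the form $gTg^{-1}$, and then $g^{-1}q \in M^T$. For the fiber over a point $q \in M_{\max}$, fix a preimage $(g_0, p_0) \in G \times M^T$ with $g_0 p_0 = q$. Any preimage has a representative of the form $(g_0 s, s^{-1} p_0)$, where the constraint $s^{-1}p_0 \in M^T$ is equivalent to $sTs^{-1} \subset G_{p_0}$; since $G_{p_0}$ has maximal rank this says exactly that $sTs^{-1}$ is a maximal torus of $G_{p_0}^0$. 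Two such parameters $s, s'$ yield the same equivalence class if and only if $s^{-1}s' \in N_G(T)$, so via $s \mapsto sTs^{-1}$ the fiber identifies with the set of maximal tori of $G_{p_0}^0$, i.e., with $G_{p_0}^0/N_{G_{p_0}^0}(T_0)$ for any choice of maximal torus $T_0 \subset G_{p_0}^0$. Lemma \ref{lem:G/N acyclic}, applied to $K = G_{p_0}$, shows that this space is $\RR$-acyclic.

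The main technical point to watch is the applicability of the Vietoris--Begle theorem: $f$ is a continuous surjection between compact Hausdorff spaces, and we have shown all of its fibers are $\RR$-acyclic (in particular non-empty and connected), so $f^*$ is an isomorphism in \v{C}ech cohomology, and the analogous statement for $\tilde f$ follows either by the same argument or by noting that its fibers over points of $EG \times_G M_{\max}$ coincide with those of $f$. Combining this with the canonical identification of Borel constructions described above gives the proposition.
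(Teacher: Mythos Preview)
Your approach is essentially the same as the paper's: construct the map $f:G\times_{N_G(T)}M^T\to\Mmax$, show its fibers are $\RR$-acyclic via Lemma~\ref{lem:G/N acyclic}, and pass to Borel constructions. Your fiber computation (identifying $f^{-1}(q)$ with the space of maximal tori of $G_{p_0}^0$) is a slight rephrasing of the paper's identification $f^{-1}(p)\cong G_p/N_{G_p}(T)$; both are correct and yield the same acyclic space.

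The one point that needs more care is the invocation of Vietoris--Begle for $\tilde f$. You correctly note that the fibers of $\tilde f$ coincide with those of $f$, but the classical Vietoris--Begle theorem requires a closed surjection between compact (or at least suitably paracompact) spaces, and $EG\times_{N_G(T)}M^T$ and $EG\times_G\Mmax$ are not compact. Saying ``the analogous statement for $\tilde f$ follows by the same argument'' glosses over exactly the hypothesis that fails. The paper handles this by passing to finite-dimensional approximations $EG_k$ of $EG$, so that the induced maps $f_G^k$ are between compact spaces; it then uses the Leray spectral sequence together with Godement's result that for maps between compact spaces the stalks of the Leray sheaf are the fiber cohomologies, and finally matches degrees by choosing $k$ large. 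You should either do the same, or cite a version of Vietoris--Begle (or the Leray spectral sequence argument) that applies to the non-compact total spaces at hand; as written, this step is a gap.
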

\begin{proof} Consider the natural map $f:G\times_{N_G(T)}M^T\to \Mmax$ given by $f([g,p])=gp$. Clearly, $f$ is surjective. Note that (see e.g.~\cite[Lemma 1.1]{Hauschild}) we have $G\, p\cap M^T=N_G(T)\, p$ for every $p\in M^T$. Thus every element in $f^{-1}(p)$ is of the form $[g,p]$ for some $g\in G_p$. This means
\[ f^{-1}(p)\cong G_p/(N_G(T)\cap G_p)=G_p/N_{G_p}(T).\]
As $f$ is $G$-equivariant, we have $f^{-1}(gp)=gf^{-1}(p)$ for all $g\in G$. It follows from Lemma \ref{lem:G/N acyclic} that $H^n(f^{-1}(p))$ vanishes for all $n>0$ and is equal to $\RR$ for $n=0$. We consider the Leray spectral sequence of the induced maps on the finite-dimensional approximations of the Borel construction
\[
f_G^k:EG_k \times_{N_G(T)}M^T=EG_k \times_G (G\times_{N_G(T)} M^T)\to EG_k \times_G \Mmax.
\]
Although $f_G^k$ is not a locally trivial fibration, it is shown in \cite[Remarque 4.17.1]{Godement} that the stalks of the Leray sheaf are given by the cohomologies of the fibers because $f_G^k$ is a map between compact spaces.
Because $G$ acts freely on $EG_k$, we have that the fiber $(f_G^k)^{-1}([v,p])$ is equal to $f^{-1}(p)$ and hence acyclic. This implies that the Leray spectral sequence collapses at the $E_2$-term with 
\[
E_\infty^{rs}= E_2^{rs}=\begin{cases} H^r( EG_k \times_G \Mmax) & s=0 \\ 0 & s>0.\end{cases}
\]
For each degree  $i$ we may choose $k$ such that the $i$-th equivariant cohomologies $H^i_{N_G(T)}(M^T)$ and $H^i_G(\Mmax)$ coincide with those of the respective $k$-th finite-dimensional approximations and thus  $H^*_{N_G(T)}(M^T)=H^*_G(\Mmax)$.
\end{proof}
\begin{rem} This shows that in case of an action of a compact connected Lie group on a compact manifold $M$ all of whose isotropy groups have maximal rank, the equivariant cohomology $H^*_G(M)$ only depends on the induced action of the Weyl group of $G$ on $M^T$. Under the additional assumption of connectedness of isotropy groups, it is proven in \cite{Hauschild} that the whole $G$-action is determined by the Weyl group action on $M^T$ up to equivariant diffeomorphism. Having this fact in mind, the proposition above is not surprising as equivariant cohomology with real coefficients does not see whether isotropy groups are connected.
\end{rem}

\begin{cor}  \label{cor:M_max equiv formal} For any $G$-action on $M$, the induced action on $\Mmax$ is equivariantly formal.
\end{cor}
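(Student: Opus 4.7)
My approach is to compute $H^*_G(\Mmax)$ explicitly using the results already assembled in the earlier parts of the paper and to observe that the resulting module is manifestly free over $S(\mfg^*)^G$.

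Fix a maximal torus $T\subset G$. By Proposition \ref{prop:eqcohomnormalizer} we have
\[
H^*_G(\Mmax) = H^*_{N_G(T)}(M^T).
\]
Because $W = N_G(T)/T$ is finite and we work with real coefficients, the standard identification used elsewhere in the paper (as in \cite[Theorem C.35]{GGK}) gives
\[
H^*_{N_G(T)}(M^T) = H^*_T(M^T)^W.
\]
Now the $T$-action on $M^T$ is trivial, so
\[
H^*_T(M^T) = H^*(M^T)\otimes S(\mft^*),
\]
which is manifestly free over $S(\mft^*)$. I then apply the second part of Lemma \ref{lem:freenessofinvmodule} (with $W'=W$ and $A = H^*_T(M^T)$) to conclude that $H^*_T(M^T)^W$ is free over $S(\mft^*)^W$. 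Since $S(\mft^*)^W = S(\mfg^*)^G$ (Chevalley--Shephard--Todd), this exhibits $H^*_G(\Mmax)$ as a free $S(\mfg^*)^G$-module, which is exactly the definition of equivariant formality.

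I do not expect a significant obstacle: the argument is essentially the assembly of facts already established in the excerpt. The only point requiring a small check is that the $W$-action on $H^*_T(M^T)$ satisfies the compatibility hypothesis of Lemma \ref{lem:freenessofinvmodule}, but this is automatic since both the $W$-action and the $S(\mft^*)$-module structure descend from the geometric fibration $ET\times_T M^T \to BT$ together with its $N_G(T)$-symmetry.
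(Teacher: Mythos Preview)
Your proof is correct and follows exactly the same route as the paper's own argument: apply Proposition~\ref{prop:eqcohomnormalizer}, pass to $W$-invariants in $H^*_T(M^T)=H^*(M^T)\otimes S(\mft^*)$, and invoke the second part of Lemma~\ref{lem:freenessofinvmodule} to obtain freeness over $S(\mft^*)^W=S(\mfg^*)^G$. The only cosmetic difference is that you spell out the compatibility check for the $W$-action, which the paper leaves implicit.
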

\begin{proof}We have
\[
H^*_G(\Mmax)=H^*_{N_G(T)}(M^T)=(H^*_T(M^T))^W=(H^*(M^T)\otimes S(\mft^*))^W,
\]
where $W$ is the Weyl group of $G$. Note that the $W$-action on $H^*(M^T)$ is not necessarily trivial. By Lemma \ref{lem:freenessofinvmodule}, $H^*_G(\Mmax)$ is a free module over $S(\mfg^*)^G=S(\mft^*)^W$.
\end{proof}

\begin{ex} One important action to which the Corollary applies is the action of $G$ on itself by conjugation. Another way to  prove that this particular action is equivariantly formal is to check directly that the restricted action of a maximal torus $T$ is equivariantly formal. We have $G^T=T$, so the $T$-action is equivariantly formal if and only if $\dim H^*(T)=\dim H^*(G)$. But it is a classical result that the real cohomology of $G$ is an exterior algebra over $\rk G$ generators, see e.g.~\cite[II.2.2.]{LieGroupsI}. For yet another proof of the equivariant formality of the action by conjugation see \cite[Section 11.9, Item 6.]{GuilleminSternberg}. 
\end{ex}

\begin{prop} A $G$-action on $M$ is equivariantly formal if and only if we have $\dim H^*(M)=\dim H^*(\Mmax)$.
\end{prop}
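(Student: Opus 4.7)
The plan is to reduce everything to the classical Borel dimension criterion for torus actions---namely, that a $T$-action on a space $X$ is equivariantly formal if and only if $\dim H^*(X)=\dim H^*(X^T)$---by means of Proposition \ref{prop:GTeqformal} and Corollary \ref{cor:M_max equiv formal}. The crux will be an unconditional identity $\dim H^*(\Mmax) = \dim H^*(M^T)$, so the problem is really about comparing $H^*(M)$ with $H^*(M^T)$.

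First I would record that $\Mmax^T = M^T$: the inclusion $M^T\subset \Mmax^T$ holds because $\mft\subset\mfg_p$ forces $\rk\mfg_p=\rk\mfg$, and the reverse inclusion is trivial. Next, Corollary \ref{cor:M_max equiv formal} says the $G$-action on $\Mmax$ is equivariantly formal, and Proposition \ref{prop:GTeqformal} then transfers this to the restricted $T$-action on $\Mmax$. Applying the standard $T$-equivariant criterion (in the form $H^*_T(X)=H^*(X)\otimes S(\mft^*)$ recalled in Section \ref{prep}, which forces $\dim H^*(X)=\dim H^*(X^T)$ for equivariantly formal $T$-actions) to $X=\Mmax$ yields
\[
\dim H^*(\Mmax)=\dim H^*(\Mmax^T)=\dim H^*(M^T).
\]

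With this identity in hand the claim reduces to showing that the $G$-action on $M$ is equivariantly formal if and only if $\dim H^*(M)=\dim H^*(M^T)$, and this is immediate: Proposition \ref{prop:GTeqformal} equates $G$-equivariant formality with $T$-equivariant formality, and the latter is equivalent to $\dim H^*(M)=\dim H^*(M^T)$ by the Borel criterion once more. I do not foresee any substantial obstacle---every ingredient is already in the excerpt, and the only genuinely new observation is the dimension identity for $\Mmax$, which is really just Corollary \ref{cor:M_max equiv formal} plus the trivial fact $\Mmax^T=M^T$.
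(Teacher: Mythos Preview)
Your proposal is correct and follows essentially the same route as the paper's proof: reduce to the torus via Proposition~\ref{prop:GTeqformal}, use Corollary~\ref{cor:M_max equiv formal} to conclude the $T$-action on $\Mmax$ is equivariantly formal, and apply the Borel dimension criterion twice. The only difference is that you make the identity $\Mmax^T=M^T$ explicit, whereas the paper leaves it implicit when writing $\dim H^*(M^T)=\dim H^*(\Mmax)$.
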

\begin{proof} Choose a maximal torus $T\subset G$. The $G$-action on $M$ is equivariantly formal if and only if the $T$-action on $M$ is equivariantly formal, i.e., if and only if $\dim H^*(M)=\dim H^*(M^T)$. But by Corollary \ref{cor:M_max equiv formal} the $G$-action on $\Mmax$ and hence the $T$-action on $\Mmax$ is always equivariantly formal, i.e., $\dim H^*(M^T)=\dim H^*(\Mmax)$.
\end{proof}

Fixing a maximal torus $T\subset G$, we have $M^T\subset \Mmax$. Hence, we obtain the following commutative diagram (see also the proof of Theorem C.70 in \cite{GGK}).
\[\xymatrix{
H^*_G(M)\ar[r]\ar[d]&H^*_G(\Mmax)\ar[d]\\
H^*_T(M)\ar[r]& H^*_T(\Mmax)\ar[r] & H^*_T(M^T)}
\]
In this diagram, the vertical maps are injective, and $H^*_T(\Mmax)\to H^*_T(M^T)$ is injective by Corollary \ref{cor:M_max equiv formal}. The kernel of $H^*_T(M)\to H^*_T(\Mmax)$ equals the kernel of $H^*_T(M)\to H^*_T(M^T)$, which in turn equals the torsion submodule of $H^*_T(M)$.

\begin{prop} \label{prop:kernelG} The kernel of $H^*_G(M)\to H^*_G(\Mmax)$ equals the torsion $S(\mfg^*)^G$-submodule of $H^*_G(M)$.
\end{prop}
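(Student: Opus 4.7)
The plan is to reduce the statement to the analogous fact for the torus $T$ by exploiting the commutative diagram displayed just before the proposition. Three ingredients are already at hand: (i) both vertical maps $H^*_G(-)\to H^*_T(-)$ are injective, since $G$-equivariant cohomology is the $W$-invariant part of $T$-equivariant cohomology; (ii) $H^*_T(\Mmax)\to H^*_T(M^T)$ is injective by the equivariant formality of the $T$-action on $\Mmax$ established in Corollary \ref{cor:M_max equiv formal}; and (iii) by the remarks preceding the proposition, the kernel of $H^*_T(M)\to H^*_T(M^T)$ coincides with the $S(\mft^*)$-torsion submodule of $H^*_T(M)$. Chasing the diagram, an element $\alpha\in H^*_G(M)$ lies in the kernel of $H^*_G(M)\to H^*_G(\Mmax)$ if and only if its image in $H^*_T(M)$ is $S(\mft^*)$-torsion.

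The remaining task is purely algebraic: show that for an element $\alpha$ of $H^*_G(M)\subset H^*_T(M)$, being $S(\mft^*)$-torsion is equivalent to being $S(\mfg^*)^G=S(\mft^*)^W$-torsion. One direction is trivial because $S(\mft^*)$ is an integral domain and $S(\mfg^*)^G$ is a nonzero subring: any nonzero annihilator in $S(\mfg^*)^G$ is already a nonzero annihilator in $S(\mft^*)$.

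For the converse, I would use a Weyl-averaging trick. Given a nonzero $g\in S(\mft^*)$ with $g\cdot \alpha=0$ in $H^*_T(M)$, form the product
\[
f:=\prod_{w\in W} (w\cdot g)\in S(\mft^*)^W = S(\mfg^*)^G,
\]
which is nonzero as $S(\mft^*)$ is a domain. Since $\alpha$ sits in the $W$-invariant part $H^*_T(M)^W=H^*_G(M)$, applying $w\in W$ to the identity $g\alpha=0$ gives $(w\cdot g)\alpha=0$ for every $w$, hence $f\alpha=0$ in $H^*_T(M)$. The injectivity of $H^*_G(M)\hookrightarrow H^*_T(M)$ then gives $f\alpha=0$ in $H^*_G(M)$, so $\alpha$ is $S(\mfg^*)^G$-torsion.

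The whole argument is essentially formal once the setup is in place; the only mildly clever ingredient is the Weyl-averaging step, and even that is standard. The main conceptual obstacle was already overcome in Corollary \ref{cor:M_max equiv formal}, which supplies the injectivity of $H^*_T(\Mmax)\to H^*_T(M^T)$ that makes the diagram chase work.
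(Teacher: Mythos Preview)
Your proof is correct and follows essentially the same route as the paper: both reduce to the torus case via the commutative diagram and Corollary \ref{cor:M_max equiv formal}, and both identify $\Tors_{S(\mft^*)}(H^*_T(M))\cap H^*_T(M)^W$ with $\Tors_{S(\mfg^*)^G}(H^*_G(M))$ using the Weyl-averaging trick $f=\prod_{w\in W}w\cdot g$. The paper's proof is more terse at this last step, leaving the averaging argument implicit (it is spelled out later, in the proof of the subsequent theorem), whereas you give the details explicitly.
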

\begin{proof} The map $H^*_G(M)\to H^*_G(\Mmax)$ is nothing but the restriction of $H^*_T(M)\to H^*_T(\Mmax)$ to $W$-invariant elements: $(H^*_T(M))^W\to (H^*_T(\Mmax))^W$. Thus, 
\begin{align*}
\ker (H^*_G(M)\to H^*_G(\Mmax))&=\ker (H^*_T(M)\to H^*_T(\Mmax)) \cap (H^*_T(M))^W\\
&=\ker (H^*_T(M)\to H^*_T(M^T)) \cap (H^*_T(M))^W\\
&=\Tors(H^*_T(M)) \cap (H^*_T(M))^W\\
&=\Tors(H^*_G(M)),
\end{align*}
where $\Tors$ denotes the respective torsion submodule.
\end{proof}

\begin{thm} For any $G$-action, the following conditions are equivalent:
\begin{enumerate}
\item $H^*_G(M)$ is torsion-free as an $S(\mfg^*)^G$-module
\item $H^*_G(M)\to H^*_G(\Mmax)$ is injective.
\item $H^*_T(M)$ is torsion-free as an $S(\mft^*)$-module
\item $H^*_T(M)\to H^*_T(\Mmax)$ is injective.
\item $H^*_T(M)\to H^*_T(M^T)$ is injective.
\end{enumerate}
\end{thm}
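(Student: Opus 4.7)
The implication $(1)\iff(2)$ is Proposition \ref{prop:kernelG} applied to $M$, and there is nothing new to do. Similarly, for the torus action the discussion preceding Proposition \ref{prop:kernelG} already packages together the equivalences among $(3)$, $(4)$ and $(5)$: by the classical Borel localisation theorem for torus actions the kernel of $H^*_T(M)\to H^*_T(M^T)$ is exactly the torsion $S(\mft^*)$-submodule, giving $(3)\iff(5)$; and Corollary \ref{cor:M_max equiv formal} applied to the $T$-action on $\Mmax$ makes $H^*_T(\Mmax)\to H^*_T(M^T)$ injective, so the kernels of the restriction maps from $H^*_T(M)$ to $H^*_T(\Mmax)$ and to $H^*_T(M^T)$ coincide, giving $(4)\iff(5)$.

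The one substantive step left is to bridge the $G$-level conditions with the $T$-level conditions, e.g.\ $(1)\iff(3)$. The plan is to exploit the two identifications $H^*_G(M)=H^*_T(M)^W$ and $H^*_T(M)=H^*_G(M)\otimes_{S(\mfg^*)^G}S(\mft^*)$ coming from \cite[Theorem C.35]{GGK}, together with the fact that $S(\mft^*)$ is a free module of rank $|W|$ over $S(\mfg^*)^G$ via the Chevalley--Shephard--Todd--Bourbaki theorem (already used in Lemma \ref{lem:freenessofinvmodule}). For the direction $(3)\Rightarrow(1)$, $H^*_G(M)$ is an $S(\mfg^*)^G$-direct summand of $H^*_T(M)$ via the averaging map, so torsion-freeness of $H^*_T(M)$ as an $S(\mft^*)$-module implies torsion-freeness as an $S(\mfg^*)^G$-module (since $S(\mft^*)$ is finite over $S(\mfg^*)^G$; any nonzero element of $S(\mfg^*)^G$ is already nonzero in $S(\mft^*)$), hence the summand $H^*_G(M)$ is torsion-free over $S(\mfg^*)^G$. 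For $(1)\Rightarrow(3)$, the freeness of $S(\mft^*)$ over $S(\mfg^*)^G$ turns the tensor product into an $S(\mfg^*)^G$-direct sum of $|W|$ copies of $H^*_G(M)$, showing that $H^*_T(M)$ is torsion-free over $S(\mfg^*)^G$; the passage to torsion-freeness over $S(\mft^*)$ then follows from a norm argument: if $g\in S(\mft^*)$ annihilates some $\xi\in H^*_T(M)$, then so does $N(g)=\prod_{w\in W} w\cdot g\in S(\mfg^*)^G$, which is nonzero whenever $g$ is.

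As a cleaner alternative one can instead argue $(2)\iff(4)$ directly from the commutative square displayed before Proposition \ref{prop:kernelG}. The direction $(4)\Rightarrow(2)$ is immediate because the left vertical map $H^*_G(M)\hookrightarrow H^*_T(M)$ is injective, so $H^*_G(M)\to H^*_G(\Mmax)\hookrightarrow H^*_T(\Mmax)$ is a composition of injections. The direction $(2)\Rightarrow(4)$ uses Proposition \ref{prop:kernelG} to convert $(2)$ into the statement that $H^*_G(M)$ is torsion-free over $S(\mfg^*)^G$, and then the same free-module / norm argument sketched above to upgrade this to torsion-freeness of $H^*_T(M)$ over $S(\mft^*)$, which by $(3)\iff(4)$ yields $(4)$.

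The main obstacle, and really the only content beyond invoking earlier results, is the algebraic comparison of torsion over $S(\mft^*)$ with torsion over its invariant subring $S(\mfg^*)^G$; once one records the norm argument plus the free/summand structure, every other implication is a direct reading of the diagram combined with Proposition \ref{prop:kernelG} and Corollary \ref{cor:M_max equiv formal}.
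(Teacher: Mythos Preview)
Your proposal is correct and follows essentially the same route as the paper: the equivalences $(1)\iff(2)$, $(3)\iff(5)$, and $(4)\iff(5)$ are disposed of exactly as you indicate, and the bridge $(1)\iff(3)$ is handled via the freeness of $S(\mft^*)$ over $S(\mft^*)^W$ together with the norm argument $N(g)=\prod_{w\in W} w\cdot g$, which is precisely what the paper does (citing \cite[p.~211]{GGK}). Your treatment of $(3)\Rightarrow(1)$ via the averaging splitting is a bit more explicit than the paper's one-word ``Clearly'', and your alternative $(2)\iff(4)$ packaging is a harmless reorganisation, but neither constitutes a genuinely different approach.
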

\begin{proof} $(3)\iff (5)$ is a standard fact, and $(1)\iff (2)$  follows from Proposition \ref{prop:kernelG}. As $H^*_T(\Mmax)\to H^*_T(M^T)$ is injective by Corollary \ref{cor:M_max equiv formal}, we have $(4)\iff (5)$. Clearly, $(3)\Rightarrow (1)$, and the argument needed for $(1)\Rightarrow (3)$ is given in \cite[p.~211]{GGK}: if $H^*_G(M)$ is torsion-free as an $S(\mfg^*)^G=S(\mft^*)^W$-module, then $H^*_T(M)=H^*_G(M)\otimes_{S(\mft^*)^W}S(\mft^*)$ is also torsion-free as an $S(\mft^*)^W$-module because $S(\mft^*)$ is a free $S(\mft^*)^W$-module. To see that $H^*_T(M)$ is torsion-free as an $S(\mft^*)$-module, assume that $f\omega=0$ for some $f\in S(\mft^*)$ with $f\neq 0$ and $\omega\in H^*_T(M)$. But then $\prod_\gamma \gamma(f)$ is a nonzero element in $S(\mft^*)^W$ that annihilates $\omega$, which is a contradiction.
\end{proof}

\section{The highest rank stratum of $G$-actions} \label{sec:lowrank}

Consider an action of a compact connected Lie group $G$ on a compact manifold $M$, fix a maximal torus $T$ of $G$, and denote by $b$ the maximal occuring dimension of an isotropy algebra of the $T$-action on $M$. Recall that by definition $M_{b,T}=\{p\in M\mid \dim \mft_p=b\}$. As every subtorus of $G$ is conjugate to a subtorus of $T$, it follows that $b$ is the maximal occuring rank of an isotropy algebra of the $G$-action.

\begin{lem} \label{lem:lowestGstratum} $G\cdot M_{b,T}=\{p\in M\mid \rk \mfg_p=b\}=M_{b,G}$.
\end{lem}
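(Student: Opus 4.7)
The plan is to prove the two set-theoretic equalities separately, using only the standard fact that every torus in $G$ is conjugate (by an element of $G$) into the fixed maximal torus $T$, together with the observation already made in the paragraph preceding the lemma that $b$ is simultaneously the maximal rank of a $G$-isotropy algebra and the maximal dimension of a $T$-isotropy algebra.

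For the middle equality $\{p\mid \rk \mfg_p=b\}=M_{b,G}$, this is immediate from the definition $M_{b,G}=\{p\mid \rk \mfg_p\geq b\}$ combined with the fact that $b$ is the maximum. For the inclusion $G\cdot M_{b,T}\subset M_{b,G}$, I would take $p\in M_{b,T}$ so that $\mft_p\subset \mfg_p$ is an abelian subalgebra of dimension $b$; since $\mft_p$ sits inside a Cartan subalgebra of the compact Lie algebra $\mfg_p$, we get $\rk \mfg_p\geq b$, hence $=b$. Thus $p\in M_{b,G}$, and $G$-invariance of $M_{b,G}$ gives the inclusion $G\cdot M_{b,T}\subset M_{b,G}$.

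The reverse inclusion $M_{b,G}\subset G\cdot M_{b,T}$ is the substantive step, and is where the maximal-torus conjugation is used. Given $p\in M_{b,G}$, the compact isotropy group $G_p$ has rank $b$, so its identity component $G_p^0$ contains a maximal torus $S\subset G_p^0$ of dimension $b$. Since $S$ is a torus in $G$, there exists $g\in G$ with $gSg^{-1}\subset T$. Taking Lie algebras, $g(\mathrm{Lie}\,S)g^{-1}\subset \mft$, and at the same time this conjugate sits inside $\Ad(g)\mfg_p=\mfg_{gp}$, hence inside $\mft\cap\mfg_{gp}=\mft_{gp}$. Therefore $\dim \mft_{gp}\geq b$, and by maximality of $b$ we get $\dim \mft_{gp}=b$, i.e.\ $gp\in M_{b,T}$, so $p\in g^{-1}M_{b,T}\subset G\cdot M_{b,T}$.

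I do not expect any real obstacle here; the only subtlety worth highlighting is the use of the correspondence $\rk \mfg_p=\dim$ of a maximal torus of $G_p$, which relies on $G_p$ being compact, and the standard conjugacy of tori in $G$ to realize this maximal torus of $G_p$ inside the chosen $T$.
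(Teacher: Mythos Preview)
Your proof is correct. The paper does not give an explicit proof of this lemma; it is stated immediately after the observation that ``every subtorus of $G$ is conjugate to a subtorus of $T$'' (from which the equality of the two notions of $b$ follows), and the lemma is left to the reader as a direct consequence. Your argument is exactly the natural unpacking of that hint, so there is nothing to compare.
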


The results in the previous section are nonvoid only in the case $b=\rk G$. In this section we generalise the results given there to arbitrary $b$. 
Recall that for  a Lie subalgebra $\mfk\subset \mfg$ and a point $p\in M^\mfk$, we denote by $M^{\mfk,p}$ the connected component of $M^\mfk$ containing $p$. The components of $M_{b,T}$ are of the form $M^{\mft_p,p}$ with $p\in M_{b,T}$. Thus, Lemma \ref{lem:lowestGstratum} implies that the components of $M_{b,G}$ are of the form $G\cdot M^{\mft_p,p}$ with $p\in M_{b,T}$.

\begin{prop} \label{prop:HGHNG}
For every $p\in M_{b,T}$ we have
\[
H^*_G(G\cdot M^{\mft_p,p})=H^*_{N_G(\mft_p)}(M^{\mft_p,p}).
\]
\end{prop}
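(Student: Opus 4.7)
The plan is to mirror the proof of Proposition \ref{prop:eqcohomnormalizer} verbatim, replacing the maximal torus $T$ by the (generally non-maximal) torus $\exp\mft_p\subset G$ and the fixed point set $M^T$ by the component $M^{\mft_p,p}$. First I would consider the natural $G$-equivariant map
\[
f\colon G\times_{N_G(\mft_p)} M^{\mft_p,p} \;\longrightarrow\; G\cdot M^{\mft_p,p}, \qquad [g,q]\mapsto gq,
\]
where $N_G(\mft_p)$ is understood as the (finite-index) subgroup of the full normaliser that preserves the connected component $M^{\mft_p,p}$; this subgroup contains the identity component $N_G(\mft_p)^0$ and so has the same Lie algebra. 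The map $f$ is manifestly surjective by construction.

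Next I would establish the analog of the identity $Gp\cap M^T = N_G(T)\,p$ used in the proof of Proposition \ref{prop:eqcohomnormalizer}, namely
\[
Gp\cap M^{\mft_p} \;=\; N_G(\mft_p)\, p.
\]
The crucial input, which is new to the present setting, is that $b$ is the maximal occurring isotropy rank: whenever $gp\in M^{\mft_p}$, the inclusions $\mft_p,\, g\mft_p g^{-1}\subset \mfg_{gp}$ force both $\mft_p$ and $g\mft_p g^{-1}$ to be maximal tori of $\mfg_{gp}$ of common dimension $b$, hence conjugate via some $h\in G_{gp}^0$; then $hg\in N_G(\mft_p)$ and $(hg)p=gp$. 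A direct $G$-equivariance argument, exactly as in Proposition \ref{prop:eqcohomnormalizer}, now identifies
\[
f^{-1}(p) \;\cong\; G_p/(G_p\cap N_G(\mft_p)) \;=\; G_p/N_{G_p}(\mft_p).
\]
Since $\rk \mfg_p = b = \dim\mft_p$, the torus $\exp\mft_p$ is a maximal torus of the identity component $(G_p)^0$, so Lemma \ref{lem:G/N acyclic} applied to the compact Lie group $G_p$ shows that $f^{-1}(p)$ is $\RR$-acyclic. By $G$-equivariance, every fiber of $f$ is then $\RR$-acyclic.

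It remains to upgrade this fiberwise acyclicity to an equivariant cohomology statement, and here I would repeat the Leray spectral sequence argument of Proposition \ref{prop:eqcohomnormalizer}: pass to the finite-dimensional approximations
\[
f_G^k\colon EG_k\times_{N_G(\mft_p)} M^{\mft_p,p} \;\longrightarrow\; EG_k\times_G (G\cdot M^{\mft_p,p}),
\]
observe that these are maps between compact spaces whose fibers coincide with those of $f$ and hence remain $\RR$-acyclic, invoke \cite[Remarque 4.17.1]{Godement} to conclude that the Leray spectral sequence degenerates at $E_2$, and choose $k$ large enough in each fixed degree to identify $H^*_{N_G(\mft_p)}(M^{\mft_p,p})=H^*_G(G\cdot M^{\mft_p,p})$. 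The main technical point is the fiber identification, specifically the observation that $b$-maximality forces $\mft_p$ to be a maximal torus of $\mfg_{gp}$ for every $g\in G$ with $gp\in M^{\mft_p}$; once this is in hand, the rest of the argument is essentially a transcription of Proposition \ref{prop:eqcohomnormalizer}.
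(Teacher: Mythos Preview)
Your proposal is correct and follows essentially the same route as the paper's own proof: the paper also considers the map $f\colon G\times_{N_G(\mft_p)} M^{\mft_p,p}\to G\cdot M^{\mft_p,p}$, identifies the fiber over $q\in M^{\mft_p,p}$ with $G_q/N_{G_q}(\mft_p)$ by using that $\mft_p$ and $\Ad_g\mft_p$ are both maximal abelian in $\mfg_q$ (hence conjugate in $G_q$), invokes Lemma \ref{lem:G/N acyclic}, and then passes to the Borel construction. Your write-up is in fact a bit more careful than the paper's in two places: you explicitly flag the issue of whether $N_G(\mft_p)$ preserves the component $M^{\mft_p,p}$, and you spell out the Leray/Godement finite-approximation step (the paper's proof of Proposition \ref{prop:HGHNG} abbreviates this, referring back to Proposition \ref{prop:eqcohomnormalizer}).
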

\begin{proof} We follow the lines of Proposition \ref{prop:eqcohomnormalizer}.
Consider the surjective map 
\[
f:G\times_{N_G(\mft_p)} M^{\mft_p,p}\to G\cdot M^{\mft_p,p}.
\] We first show that for $q\in M^{\mft_p,p}$ the fiber $f^{-1}(q)$ is acyclic. If $[g,q']\in f^{-1}(q)$, then $gq'=q$. The compact Lie algebra $\mfg_q=\Ad_g \mfg_{q'}$ contains the two maximal abelian subalgebras $\mft_p$ and $\Ad_g \mft_p$, hence there is $h\in G_p$ such that $\Ad_{hg}\mft_p=\mft_p$, i.e., $hg\in N_G(\mft_p)$. Then $[g,q']=[h^{-1},hgq]=[h^{-1},p]$, whereupon $f^{-1}(p)=G_p/N_G(\mft_p)\cap G_p=G_p/N_{G_p}(\mft_p)$ which is acyclic by Lemma \ref{lem:G/N acyclic}.

Then the induced map on the Borel construction 
\[
f_G:EG\times_{N_G(\mft_p)}M^{\mft_p,p}=EG\times_G(G\times_{N_G(\mft_p)}M^{\mft_p,p})\to EG\times_G G\cdot M^{\mft_p,p}
\]
induces the desired isomorphism in cohomology.
\end{proof}
\begin{cor} \label{cor:MbGCM}For any $G$-action on $M$ the induced action on $M_{b,G}$ is Cohen-Macaulay.
\end{cor}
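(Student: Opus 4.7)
The plan is to reduce, via Proposition \ref{prop:HGHNG}, to computing the $T$-equivariant cohomology of a single connected component $M^{\mft_p,p}$ of $M_{b,T}$, where the Cohen-Macaulay property can be verified by hand. First, since a compact $G$-manifold has finitely many orbit types, $M_{b,G}$ is a finite disjoint union of its components $G\cdot M^{\mft_p,p}$ with $p\in M_{b,T}$, so $H^*_G(M_{b,G})$ splits as a finite direct sum and it suffices to treat one summand. By Proposition \ref{prop:HGHNG} that summand equals $H^*_{N_G(\mft_p)}(M^{\mft_p,p})$, and because $T$ is a maximal torus of $K:=N_G(\mft_p)$ (it is contained in $K$ since $T$ centralises $\mft_p$, and any torus of $K$ containing $T$ is already maximal in $G$), the same invariants argument used in the proof of Proposition \ref{prop:Gequivformalfree} identifies
\[
H^*_K(M^{\mft_p,p})=H^*_T(M^{\mft_p,p})^{W_K},\qquad W_K:=N_K(T)/T\subseteq W.
\]

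The crucial computation exploits the maximality of $b$: for any $q\in M^{\mft_p,p}$ the isotropy algebra $\mft_q$ satisfies $\mft_p\subseteq\mft_q$ and $\dim\mft_q\leq b=\dim\mft_p$, so $\mft_q=\mft_p$. Writing $T_p\subseteq T$ for the connected subtorus with Lie algebra $\mft_p$, this means $T_p$ acts trivially on $M^{\mft_p,p}$ while $T/T_p$ acts with finite isotropies. Choosing a splitting $T\cong T_p\times T/T_p$ and applying K\"unneth with real coefficients yields
\[
H^*_T(M^{\mft_p,p})\cong H^*(M^{\mft_p,p}/(T/T_p))\otimes S(\mft_p^*),
\]
which as an $S(\mft^*)$-module is a finite direct sum of copies of $S(\mft^*)/I_p$, where $I_p\subset S(\mft^*)$ is the ideal of polynomials vanishing on $\mft_p$. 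Since $I_p$ is generated by a regular sequence of length $\dim\mft-b$, this module is Cohen-Macaulay of dimension $b$ over $S(\mft^*)$.

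Applying Lemma \ref{lem:freenessofinvmodule} to the subgroup $W_K\subseteq W$ then gives that $H^*_T(M^{\mft_p,p})^{W_K}$ is Cohen-Macaulay over $S(\mft^*)^{W_K}$. To descend the coefficient ring to $S(\mfg^*)^G=S(\mft^*)^W$, I note that $S(\mft^*)$ is a finitely generated free $S(\mft^*)^W$-module by the Chevalley-Shephard-Todd theorem, so $S(\mft^*)^{W_K}$ is itself finitely generated as an $S(\mft^*)^W$-module; the graded version of \cite[Proposition IV.B.12]{Serre} invoked in Lemma \ref{lem:CMconditions} then upgrades Cohen-Macaulayness over $S(\mft^*)^{W_K}$ to Cohen-Macaulayness over $S(\mfg^*)^G$, completing the argument. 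The main technical subtlety I anticipate is justifying the formula $H^*_K(X)=H^*_T(X)^{W_K}$ for the possibly disconnected group $K=N_G(\mft_p)$, and ensuring that the induced $W_K$-action on $H^*_T(M^{\mft_p,p})$ is compatible with the $S(\mft^*)$-action in the sense required by Lemma \ref{lem:freenessofinvmodule}.
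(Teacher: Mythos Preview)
Your proposal is correct and follows essentially the same route as the paper: decompose $M_{b,G}$ into components $G\cdot M^{\mft_p,p}$, apply Proposition~\ref{prop:HGHNG}, identify $H^*_{N_G(\mft_p)}(M^{\mft_p,p})$ with $H^*_T(M^{\mft_p,p})^{W_K}$ for the finite group $W_K=N_K(T)/T\subseteq W$, compute $H^*_T(M^{\mft_p,p})\cong H^*(M^{\mft_p,p}/T)\otimes S(\mft_p^*)$, and invoke Lemma~\ref{lem:freenessofinvmodule}. The only presentational differences are that you make the descent from $S(\mft^*)^{W_K}$ to $S(\mfg^*)^G$ explicit (the paper leaves this implicit in its use of Lemma~\ref{lem:freenessofinvmodule}), and that the paper resolves the subtlety you flag about disconnected $K=N_G(\mft_p)$ by citing \cite[Proposition~III.1(i)]{Hsiang} together with Lemma~\ref{lem:G/N acyclic}, which justify $H^*_K(X)=H^*_{N_K(T)}(X)=H^*_T(X)^{W_K}$ for arbitrary compact $K$.
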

\begin{proof}
Write $M_{b,G}$ as the disjoint union $\bigcup_i G\cdot M^{\mft_{p_i},p_i}$ for some points $p_i\in M_{b,T}$. Denoting $\Gamma_i =N_{N_G(\mft_{p_i})}(T) $ we have
\[
H^*_G(M_{b,G})=\bigoplus_i H^*_{N_G(\mft_{p_i})}(M^{\mft_{p_i},p_i}) = \bigoplus_i H^*_{\Gamma_i}(M^{\mft_{p_i},p_i})
\]
where the second equality is due to \cite[Proposition III.1(i)]{Hsiang}, which is valid for arbitrary (also disconnected) compact groups because of Lemma \ref{lem:G/N acyclic}. We have
\[
T\subset \Gamma_i\subset N_G(T),
\]
hence $W_i:=\Gamma_i/T$ is a subgroup of the Weyl group $N_G(T)/T$ of $G$. Hence
\[
H^*_G(M_{b, G})=\bigoplus_i (H^*_T(M^{\mft_{p_i},p_i}))^{W_i}= \bigoplus_i (H^*(M^{\mft_{p_i},p_i}/T)\otimes S(\mft_{p_i}^*))^{W_i}.
\]
Lemma \ref{lem:freenessofinvmodule} shows that this module is Cohen-Macaulay. 
\end{proof}

\begin{rem}
If for some $i>b$ there exists a component $N$ of $M_{i,G}$ that does not intersect $M_{i+1,G}$, then statements analogous to Lemma \ref{lem:lowestGstratum},  Proposition \ref{prop:HGHNG} and Corollary \ref{cor:MbGCM} hold for $N$: we have $N=G\cdot M^{\mft_p,p}$ for some point $p\in M_{i,T}$ and the $G$-action on $N$ is Cohen-Macaulay. 
\end{rem}

We have the following commutative diagram:
\[\xymatrix{
H^*_G(M)\ar[r]\ar[d]&H^*_G(M_{b, G})\ar[d]\\
H^*_T(M)\ar[r]& H^*_T(M_{b, G})\ar[r] & H^*_T(M_{b, T})}
\]
In this diagram the vertical maps are just the inclusions of the respective subrings of Weyl-invariant elements. As the $G$-action on $M_{b, G}$ is Cohen-Macaulay, Proposition \ref{prop:CMforGT} implies that also the $T$-action on $M_{b, G}$ is, so $H^*_T(M_{b, G})\to H^*_T(M_{b, T})$ is injective (see e.g.~the Atiyah-Bredon sequence as in \cite[Theorem 6.2]{GT} or \cite[Theorem 5.2]{FranzPuppe2003}).

\begin{thm} The following conditions are equivalent:
\begin{enumerate}
\item $H^*_G(M)\to H^*_G(M_{b, G})$ is injective.
\item $H^*_T(M)\to H^*_T(M_{b, G})$ is injective.
\item $H^*_T(M)\to H^*_T(M_{b,T})$ is injective.
\end{enumerate}
\end{thm}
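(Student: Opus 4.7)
My plan is to establish (2) $\iff$ (3) and (1) $\iff$ (2) separately, using the diagram displayed just before the theorem. The key auxiliary fact is already assembled in the paragraph immediately preceding the statement: by Corollary \ref{cor:MbGCM} and Proposition \ref{prop:CMforGT}, the $T$-action on $M_{b,G}$ is Cohen-Macaulay, and hence the Atiyah--Bredon sequence yields injectivity of the second horizontal map $H^*_T(M_{b,G}) \to H^*_T(M_{b,T})$. Granted this, the equivalence (2) $\iff$ (3) is immediate: the kernel of the composition $H^*_T(M)\to H^*_T(M_{b,T})$ agrees with the kernel of its first factor $H^*_T(M)\to H^*_T(M_{b,G})$, so the two injectivity statements coincide.

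For (1) $\iff$ (2) my intention is to use base change along the free ring extension $S(\mft^*)^W=S(\mfg^*)^G \hookrightarrow S(\mft^*)$. The general identification $H^*_T(X)=H^*_G(X)\otimes_{S(\mfg^*)^G}S(\mft^*)$ from \cite[Theorem C.35]{GGK} (or \cite[Proposition 1, p.~38]{Hsiang}) applies both to $X=M$ and to $X=M_{b,G}$; this exhibits the bottom horizontal map as the extension of scalars of the top horizontal map. By the Chevalley--Shephard--Todd theorem $S(\mft^*)$ is free, and in particular faithfully flat, of rank $|W|$ over $S(\mft^*)^W$. Faithfully flat base change preserves and reflects injectivity, yielding (1) $\iff$ (2). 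The easier direction (2) $\Rightarrow$ (1) may alternatively be argued directly by observing that $H^*_G(\cdot)=H^*_T(\cdot)^W$ embeds into $H^*_T(\cdot)$ and that the top map is simply the restriction of the bottom map to $W$-invariants; the harder direction (1) $\Rightarrow$ (2), where a kernel element of the bottom map need not be $W$-invariant, is precisely what faithful flatness is needed for.

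The point I expect to require a little care is verifying that the base change formula applies to $X=M_{b,G}$, since this space is not in general a smooth submanifold of $M$. However, the framework of Section \ref{prep} is designed precisely to cover such well-behaved compact Hausdorff $G$-spaces, and the proofs of the cited identities in \cite{GGK} and \cite{Hsiang} go through for $M_{b,G}$ without essential modification.
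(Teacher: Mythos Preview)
Your proof is correct and follows essentially the same route as the paper. The only cosmetic difference is that the paper separates the two directions of (1) $\iff$ (2)---using $H^*_G(\cdot)=H^*_T(\cdot)^W$ for (2) $\Rightarrow$ (1) and freeness of $S(\mft^*)$ over $S(\mft^*)^W$ (i.e.\ flatness) for (1) $\Rightarrow$ (2)---whereas you wrap both directions into a single faithful flatness statement; your parenthetical remark about the $W$-invariants argument for the easier direction in fact recovers the paper's phrasing exactly.
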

\begin{proof} $(2)\Leftrightarrow (3)$ follows because $H^*_T(M_{b, G})\to H^*_T(M_{b, T})$ is injective and $(2)\Rightarrow (1)$ is clear because $H^*_G(M)$ and $H^*_G(M_{b, G})$ are just the modules of Weyl-invariant elements in the corresponding $T$-equivariant cohomologies. Given $(1)$, the map $H^*_T(M)\to H^*_T(M_{b, G})$ is injective as it can be written as
\[
H^*_T(M)= H^*_G(M)\otimes_{S(\mft^*)^W}S(\mft^*)\to H^*_G(M_{b, G})\otimes_{S(\mft^*)^W}S(\mft^*)= H^*_T(M_{b, G})
\]
and $S(\mft^*)$ is a free $S(\mft^*)^W$-module.
\end{proof}

\begin{que} It would be interesting to know whether there is an Atiyah-Bredon sequence for general $G$-actions whose exactness is equivalent to equivariant formality respectively Cohen-Macaulayness, see \cite{FranzPuppe, GT}. This sequence would involve the expressions $H^*_G(M_{i,G},M_{i+1,G})$, where $M_{i,G}=\{p\in M\mid \rk \mfg_p\geq i\}$. Our results stating that although $H^*_G(\Mmax)$ respectively $H^*_G(M_{b,G})$ are not explicitly calculable, they are still free respectively Cohen-Macaulay modules, support this conjecture. Is $H^*_G(M_{i,G},M_{i+1,G})$ always Cohen-Macaulay?
\end{que}

\section{A characterisation of equivariant injectivity}\label{sec:characterisationofinjectivity}

Consider an action of a torus $T$ on a compact manifold $M$. As before we will denote by $b$ the maximal occuring dimension of a $T$-isotropy algebra.  As there is no danger of confusion we will write $M_i$ for $M_{i,T}$. The goal of this section is to prove an equivalent characterisation of injectivity of the map $H^*_T(M)\to H^*_T(B)$, where $B$ denotes the infinitesimal bottom stratum of the $T$-action. Note that in many cases, e.g.~for Cohen-Macaulay actions, $B$ equals $M_{b,T}$. If the $T$-action is Cohen-Macaulay, then $H^*_T(M)\to H^*_T(B)$ is injective, see \cite[Theorem 6.2]{GT} or \cite[Theorem 5.2]{FranzPuppe2003}.

First we will recall some useful results obtained by Duflot: using the notations from Section \ref{notations},  $M_{(i)}$ is a closed submanifold of $M\setminus M_{i+i}$ \cite[Proposition 6]{Duflot}, and we obtain a push-forward map $H^*_T(M_{(i)})\to H^*_T(M\setminus M_{i+1})$ which raises degree by the codimension of $M_{(i)}$.
Duflot proved \cite[proof of Theorem 1]{Duflot} that we have short exact sequences
\begin{equation} \label{eqn:sesDuflot}
0\longrightarrow H^*_T(M_{(i)})\longrightarrow H^*_T(M\setminus M_{i+1})\longrightarrow H^*_T(M\setminus M_i)\longrightarrow 0.
\end{equation}

\begin{rem}\label{abcde} It is not hard to show  via these short exact sequences (the necessary arguments are implicit in the proof of  \cite[ Theorem 3.1]{Duflot2}) that every associated prime ideal of $H^*_T(M)$ is of the form $\mfp_\mfk=\{f\in S(\mft^*)\mid \left.f\right|_\mfk=0\}$ for some isotropy algebra $\mfk$. In particular, if $\mfk_1, \dots, \mfk_r$ are the maximal isotropy algebras (in the partial order given by inclusion), then the corresponding prime ideals  $\mfp_1, \dots, \mfp_r$ are exactly the minimal associated primes of $H^*_T(M)$. Thus $\supp H^*_T(M)=V(\mfp_1, \dots, \mfp_r)=\bigcup \mfk_i$. This also gives a different proof of Lemma \ref{lem:dimension} for compact $T$-manifolds.
\end{rem}

As in \cite{Duflot} we define 
\[
F_i=\ker H^*_T(M)\to H^*_T(M\setminus M_i)
\] 
obtaining a filtration
\[
0=F_{b+1}\subset F_b\subset\dots \subset F_{1}\subset F_0=H^*_T(M).
\]
We will make use of \cite[Theorem 1.(1)]{Duflot}: 
\begin{equation}\label{eqn:FiQuotient}
F_i/F_{i+1}\cong H^*_T(M_{(i)}).
\end{equation}
In \cite{Duflot} it is only claimed that this is an isomorphism of vector spaces, but it is an isomorphism of $S(\mft^*)$-modules as well, as can be seen from the way the isomorphism is constructed: in the diagram on the bottom of p.~260 in \cite{Duflot}, all maps respect the $S(\mft^*)$-module structure.

The following lemma is well-known.

\begin{lem}[{\cite[Proposition 1.2.9]{BrunsHerzog}}] \label{lem:depthses} Let $0\to A\to B\to C\to 0$ be an exact sequence of finitely generated graded $S(\mft^*)$-modules. Then the following hold:
\begin{enumerate}
\item $\depth A\geq \min \{\depth B,\depth C+1\}$
\item $\depth B\geq \min\{\depth A,\depth C\}$
\item $\depth C\geq \min \{\depth A-1,\depth B\}$
\end{enumerate}
\end{lem}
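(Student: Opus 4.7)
This is the standard depth lemma, and the natural route is through the Ext-characterization of depth. Specifically, I would use the fact that for a finitely generated graded $S(\mft^*)$-module $M$, with $k = S(\mft^*)/S(\mft^*)_+$ denoting the residue field of the irrelevant maximal ideal,
\[
\depth M = \min\{\, i \geq 0 \mid \Ext^i_{S(\mft^*)}(k, M) \neq 0\,\},
\]
with the convention that the depth of the zero module is $+\infty$. This is the standard formula in the graded setting, completely analogous to the local case, and I would cite it from Bruns--Herzog (or reprove it by induction along a maximal $M$-regular sequence in $S(\mft^*)_+$).

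Next I would apply the functor $\Ext^*_{S(\mft^*)}(k, -)$ to the short exact sequence $0 \to A \to B \to C \to 0$ to obtain the long exact sequence
\[
\cdots \to \Ext^{i-1}(k,C) \to \Ext^i(k,A) \to \Ext^i(k,B) \to \Ext^i(k,C) \to \Ext^{i+1}(k,A) \to \cdots
\]
All three inequalities are then purely formal consequences of this sequence combined with the Ext-characterization above.

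Concretely, for (1), set $n = \min\{\depth B, \depth C + 1\}$; for any $i < n$ one has $i < \depth B$ and $i - 1 < \depth C$, so the two neighbouring Ext terms $\Ext^i(k,B)$ and $\Ext^{i-1}(k,C)$ vanish, forcing $\Ext^i(k,A) = 0$. For (2), set $n = \min\{\depth A, \depth C\}$; for $i < n$ both $\Ext^i(k,A)$ and $\Ext^i(k,C)$ vanish, hence so does the middle term $\Ext^i(k,B)$. For (3), set $n = \min\{\depth A - 1, \depth B\}$; for $i < n$ we have $\Ext^i(k,B) = 0$ and $i + 1 < \depth A$, hence $\Ext^{i+1}(k,A) = 0$, so $\Ext^i(k,C)$ is squeezed to $0$.

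The main obstacle is not really conceptual but bookkeeping: one must be slightly careful in the graded setting to verify that the Ext-characterization of depth still holds (so that maximal regular sequences live inside the irrelevant ideal and have the expected length) and that the long exact sequence for Ext is available for finitely generated graded modules over $S(\mft^*)$. Both are entirely standard, so after recording these preliminaries the three inequalities drop out by the diagram-chase above.
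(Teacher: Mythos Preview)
Your proof is correct and is exactly the standard argument (indeed, it is essentially the proof given in Bruns--Herzog). Note, however, that the paper does not supply its own proof of this lemma at all: it simply records the statement as well-known and cites \cite[Proposition 1.2.9]{BrunsHerzog}, so there is nothing to compare against beyond the reference.
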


\begin{lem} \label{lem:depthFi}
We have $\depth F_i \geq i$.
\end{lem}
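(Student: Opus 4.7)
The plan is downward induction on $i$, with trivial base case $F_{b+1}=0$ (of infinite depth). For the inductive step, assume $\depth F_{i+1}\geq i+1$ and apply Lemma \ref{lem:depthses}(2) to the short exact sequence
\[
0\to F_{i+1}\to F_i\to H^*_T(M_{(i)})\to 0
\]
coming from \eqref{eqn:FiQuotient}, which yields $\depth F_i\geq \min\{\depth F_{i+1},\depth H^*_T(M_{(i)})\}$. Since $\depth F_{i+1}\geq i+1\geq i$, the whole argument reduces to showing $\depth H^*_T(M_{(i)})\geq i$.

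For that bound I would decompose $M_{(i)}$ into connected components, each of the form $N=(M^{\mfk,p})_{\reg}$ for some $i$-dimensional subalgebra $\mfk\subset\mft$. On such an $N$ the connected subtorus $K\subset T$ with Lie algebra $\mfk$ acts trivially, while $T/K$ acts with only discrete isotropies, so a standard argument with the Borel fibration $BK\to ET\times_T N\to E(T/K)\times_{T/K}N$ (using $H^*_{T/K}(N)\cong H^*(N/T)$ for the almost free quotient, with real coefficients) gives
\[
H^*_T(N)\cong S(\mfk^*)\otimes H^*(N/T)
\]
as $S(\mft^*)$-modules, with $S(\mft^*)$ acting via the restriction $S(\mft^*)\twoheadrightarrow S(\mfk^*)$. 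Since $H^*_T(M_{(i)})\cong F_i/F_{i+1}$ is finitely generated over $S(\mft^*)$ as a subquotient of $H^*_T(M)$, only finitely many components can contribute and each $H^*(N/T)$ must be finite-dimensional; hence $H^*_T(M_{(i)})$ is a finite direct sum of shifted copies of modules of the form $S(\mfk^*)$. Each such $S(\mfk^*)$ is the quotient of $S(\mft^*)$ by a regular sequence of length $\dim\mft-i$ (a basis of the annihilator of $\mfk$ in $\mft^*$), so has depth $\dim\mft-(\dim\mft-i)=i$ as $S(\mft^*)$-module. Taking direct sums preserves this bound, giving $\depth H^*_T(M_{(i)})\geq i$ and completing the induction.

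The main obstacle is justifying the identification $H^*_T(N)\cong S(\mfk^*)\otimes H^*(N/T)$ with the correct $S(\mft^*)$-module structure, since $N$ is generally open rather than closed in $M^{\mfk,p}$ and hence not compact. The compatibility can be established via the Leray-Serre spectral sequence of the Borel fibration above, which degenerates because $T$ acts trivially on $H^*(N)$; alternatively, this module structure is essentially the content of Duflot's construction of the isomorphism in \eqref{eqn:FiQuotient} (the diagram at the bottom of p.~260 in \cite{Duflot} respects the $S(\mft^*)$-module structure), so one can simply read off the required depth information from her argument.
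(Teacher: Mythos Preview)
Your proof is correct and follows essentially the same approach as the paper: downward induction on $i$ via the short exact sequence from \eqref{eqn:FiQuotient}, together with the computation $H^*_T(M_{(i)})\cong\bigoplus_j H^*(Y_{i,j}/T)\otimes S(\mft_{i,j}^*)$ giving depth $i$, and Lemma~\ref{lem:depthses}(2). You are slightly more explicit than the paper about why the sum is finite and each $H^*(N/T)$ finite-dimensional, and about the module-structure identification, but the argument is the same.
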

\begin{proof}  

By \eqref{eqn:FiQuotient} we have for all $i$ an exact sequence
\[0\to F_{i+1}\to F_i\to H^*_T(M_{(i)})\to 0. \]  Let $Y_{i,j}$ be the connected components of $M_{(i)}$, and denote the ($i$-dimensional) isotropy algebra of $Y_{i,j}$ by $\mft_{i,j}$. Then we have $H^*_T(M_{(i)})=\bigoplus H^*(Y_{i,j}/T)\otimes S(\mft_{i,j}^*)$ which implies 
\[\depth H^*_T(M_{(i)})=i.\] As $F_{b+1}=0$, this implies $\depth F_b =b$. By induction, $F_i$ sits in an exact sequence where the term on the left has depth $\geq i+1$ and the one on the right has depth $i$. By Lemma \ref{lem:depthses} this implies $\depth F_i \geq i$.
\end{proof}

We first adress the question when the equivariant cohomology $H^*_T(M)$ has depth $0$ over $S(\mft^*)$. The following definition turns out to be convenient in the statement of our results. Recall that the basic cohomology $\Hb^*(N)$ of a $T$-manifold $N$ is the cohomology of the complex of horizontal and invariant differential forms on $N$. By \cite[Theorem 30.36]{Michor} it coincides with the singular cohomology of the orbit space $N/T$. We have a natural map $\Hb^*(N)\to H^*_T(N)$, which in general is neither injective \cite[Example C.18]{GGK} nor surjective. For locally free actions, however, it is an isomorphism.
\begin{dfn}
Let $U$ be a small equivariant open neighbourhood of $M_{(1)}$ in $M\setminus M_{2}$. We say the $T$-action on $M$ \emph{has no essential basic cohomology} if 
\begin{equation}\label{eqn:essbasiccohom}
\Hb^*(\Mr)\to \Hb^*(U\cap \Mr)
\end{equation}
 is injective.
\end{dfn}

Informally speaking, the next proposition shows that $H^*_T(M)$ has depth 0 if and only if the orbit space of the regular stratum has some topology that is not detected by $M^{(1)}/T$.

\begin{prop} \label{depthesscohom}
If the $T$-action on $M$ is almost effective, then $\depth H^*_T(M)\geq 1$ if and only if the $T$-action has no essential basic cohomology.
\end{prop}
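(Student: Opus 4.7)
The plan is to prove both implications via their contrapositives, relying on Lemma~\ref{lem:depthFi} and the Duflot isomorphism~\eqref{eqn:FiQuotient}. The latter gives, in particular, the surjections $H^*_T(M)\twoheadrightarrow H^*_T(\Mr)$ (case $i=0$) and $F_1\twoheadrightarrow H^*_T(M_{(1)})$ (case $i=1$). I will phrase $\depth H^*_T(M)\geq 1$ as vanishing of the $\mfm$-socle $\{\omega\in H^*_T(M):\mfm\omega=0\}$, where $\mfm=S(\mft^*)_+$.

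For the direction \emph{no essential basic cohomology $\Rightarrow \depth\geq 1$}, I argue the contrapositive. Given a nonzero socle element $\omega\in H^*_T(M)$, observe that $H^*_T(U)=H^*_T(M_{(1)})=\bigoplus_j H^*(Y_j/T)\otimes S(\mft_j^*)$ is free over each $S(\mft_j^*)$, and since the restriction $\mfm\to S(\mft_j^*)_+$ is surjective, $H^*_T(U)$ has trivial $\mfm$-socle. Hence $\omega|_U=0$, so $\omega|_V=0$. By Lemma~\ref{lem:depthFi}, $\depth F_1\geq 1$, so $\omega\notin F_1$, and $\omega|_{\Mr}\neq 0$ is a nonzero class in $\ker(\Hb^*(\Mr)\to\Hb^*(V))$.

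For the converse, again by contrapositive: let $\beta\in\Hb^*(\Mr)$ be nonzero with $\beta|_V=0$. Replacing $\beta$ by $\mfm^k\beta$ for maximal $k$ (using finite-dimensionality of $\Hb^*(\Mr)$ over $\RR$), one may assume $\mfm\beta=0$. Lift $\beta$ to $\omega\in H^*_T(M)$ via the Duflot surjection. Since $\omega|_V=0$, the Thom-Gysin sequence for the normal bundle $\nu$ of $M_{(1)}$ in $M$---short-exact because the Euler class $e(\nu)\in H^*_T(M_{(1)})$ is a non-zero-divisor (its leading $S(\mft_j^*)$-term is a nonzero product of weights)---gives $\omega|_U=e(\nu)\gamma$ for some $\gamma\in H^*_T(M_{(1)})$. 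Using $F_1\twoheadrightarrow H^*_T(M_{(1)})$, lift $\gamma$ to $\omega_1\in F_1$; the identification $F_1/F_2\cong H^*_T(M_{(1)})$ via Thom push-forward yields $\omega_1|_U=e(\nu)\gamma$. Setting $\omega'=\omega-\omega_1$ gives $\omega'|_{\Mr}=\beta$ and $\omega'|_U=0$. For $x\in\mfm$, the class $x\omega'$ lies in $F_1$ (as $x\beta=0$), has zero restriction to $U$, and hence (by the non-zero-divisor property of $e(\nu)$) maps to zero under $F_1\to F_1/F_2\cong H^*_T(M_{(1)})$, i.e., $x\omega'\in F_2$. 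Thus $\mfm\omega'\subseteq F_2$, while $\bar\omega'\neq 0$ in $H^*_T(M)/F_2$.

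The final step, which is where the main obstacle of lifting an $\Mr$-level phenomenon to a global socle element is overcome, uses $\depth F_2\geq 2$ from Lemma~\ref{lem:depthFi}: this gives $\Ext^i(S(\mft^*)/\mfm,F_2)=0$ for $i\leq 1$. Applying $\Hom(S(\mft^*)/\mfm,-)$ to $0\to F_2\to H^*_T(M)\to H^*_T(M)/F_2\to 0$ yields an isomorphism of $\mfm$-socles, so the nonzero socle class $\bar\omega'$ lifts to a nonzero socle element of $H^*_T(M)$, contradicting $\depth\geq 1$.
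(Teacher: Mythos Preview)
Your proof is correct (modulo the undefined symbol $V$, which from context must mean $U\cap\Mr$; you should say so), but it is organised rather differently from the paper's argument.

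The paper proceeds in three clean steps. First, from the short exact sequence $0\to F_2\to H^*_T(M)\to H^*_T(M\setminus M_2)\to 0$ and $\depth F_2\geq 2$, Lemma~\ref{lem:depthses} gives $\depth H^*_T(M)=0\iff\depth H^*_T(M\setminus M_2)=0$, reducing the problem to $M\setminus M_2$. Second, the Mayer--Vietoris sequence for $M\setminus M_2=U\cup\Mr$ is short exact (because $(i^*,j^*)$ is injective, which follows from injectivity of $\cdot\, e$), and from it one reads off directly that $\ker i^*=0$ if and only if \eqref{eqn:essbasiccohom} is injective. Third, $i^*$ is injective if and only if $\depth H^*_T(M\setminus M_2)\geq 1$: one direction is Lemma~\ref{lem:depthses} applied to $0\to H^*_T(M\setminus M_2)\to H^*_T(M_{(1)})\to\mathrm{coker}\to 0$, and for the other direction a nonzero element of $\ker i^*$ comes from $H^*_T(\Mr,U\setminus M_{(1)})$, hence has support~$0$.

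Your argument uses the same ingredients ($\depth F_2\geq 2$, the Euler class being a non-zero-divisor, $\depth H^*_T(M_{(1)})=1$) but deploys them by explicit element-chasing: you characterise depth~$0$ via the $\mfm$-socle and, for each implication, manufacture or transport a socle element by hand using the Thom--Gysin sequence and the $\Ext$-isomorphism of socles coming from $\depth F_2\geq 2$. This is perfectly valid and perhaps more transparent about \emph{where} the obstructing class lives, but it is longer; the paper's use of Mayer--Vietoris and Lemma~\ref{lem:depthses} packages the same manipulations more economically and isolates the intermediate condition ``$i^*$ injective'' as the pivot. A minor point: your normal bundle $\nu$ should be that of $M_{(1)}$ in $M\setminus M_2$ (equivalently in $U$), not in $M$.
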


\begin{proof}
First of all, consider the exact sequence
\[
0 \longrightarrow F_{2} \longrightarrow H^*_T(M) \longrightarrow H^*_T(M\setminus M_{2}) \longrightarrow 0.
\]
Because $\depth F_{2}\geq 2$ by Lemma \ref{lem:depthFi}, Lemma \ref{lem:depthses} implies that $\depth H^*_T(M)=0$ if and only if  $\depth H^*_T(M\setminus M_{2})=0$.

Let $i:M_{(1)}\into M\setminus M_{2}$ be the inclusion. We have the commutative diagram 
\[\xymatrix{ 0\ar[r] &H^*_T(M_{(1)})\ar[r]\ar[dr]_{\cdot e} & H^*_T( M\setminus M_{2})\ar[d]^{i^*} \ar[r]^{j^*} & H^*_T(\Mr)\ar[r]&0\\
 && H^*_T(M_{(1)})}
\]
in which the top row is short exact by \eqref{eqn:sesDuflot}, and $e$ is the Euler class of the normal bundle of $M_{(1)}$ in $M\setminus M_2$. Because multiplication with $e$ is injective by \cite[Proposition 4]{Duflot}, 
we see that $(i^*, j^*)$ is injective. Noting that $H^*_T(U)=H^*_T(M_{(1)})$, we have that the Mayer-Vietoris sequence of the covering $M\setminus M_{2}=U\cup \Mr$ becomes a short exact sequence
\[
0  \longrightarrow H^*_T(M\setminus M_{2})\overset{(i^*, j^*)}{\longrightarrow}   H^*_T(M_{(1)}) \oplus H^*_T(\Mr)\longrightarrow H^*_T(U\cap \Mr)\longrightarrow 0.
\]

In particular we see that $\ker(i^*)=0$ if and only if the $T$-action has no essential basic cohomology. It remains to show that $i^*$ is injective if and only of $\depth H^*_T(M\setminus M_2)\neq 0$.  If $i^*$ is injective, we obtain a short exact sequence
\[
0\longrightarrow H^*_T(M\setminus M_2)\longrightarrow H^*_T(M_{(1)})\longrightarrow H^*_T(M\setminus M_2,M_{(1)}) \longrightarrow 0
\]
Because $\depth H^*_T(M_{(1)})=1$, Lemma \ref{lem:depthses} implies that $\depth H^*_T(M\setminus M_{2})\neq 0$. If $i^*$ is not injective, then there is an element in $H^*_T(M\setminus M_2)$ coming from   
\[H^*_T(M\setminus M_{2}, M_{(1)})=H^*_T(M\setminus M_{2}, U)=H^*_T(\Mr, U\setminus M_{(1)}).\]
As the action of $T$ on  $\Mr$ is locally free this element has support $0$. Consequently, $\depth H^*_T(M\setminus M_2)=0$.
\end{proof}

Note that if the $T$-action has no essential basic cohomology, then 
\[
H^*_{\bas, c}(\Mr)\to \Hb^*(\Mr)
\]
is the zero map, where $H^*_{\bas,c}$ denotes basic cohomology with compact support. This follows because this map sits in the long exact sequence of the pair $(\Mr,V)$, where $V$ is a tubular neighborhood of $M\setminus \Mr$, and the inclusion $U\to \Mr$ factors through $V$. In general, this is not an equivalent characterisation of having no essential basic cohomology, as the following example shows.

\begin{ex} \label{ex:cylinderasorbitspace} Consider the $T^2$-action on $\CC P^2$ given in homogeneous coordinates by 
\[
(t_0,t_1)\cdot [z_0:z_1:z_2] = [t_0z_0,t_1z_1,z_2].
\] 
This equivariantly formal action has a triangle as orbit space, with the three fixed points as vertices. Let $D^2$ be a disk in the regular orbit space and remove $D^2\times T^2$ from $\CC P^2$. Denote by  $M$ the manifold obtained by equivariantly attaching two copies of $\CC P^2\setminus D^2\times T^2$ along the boundary in the natural way, see Figure \ref{fig:orbitspace}.

\begin{figure}[h]
\includegraphics{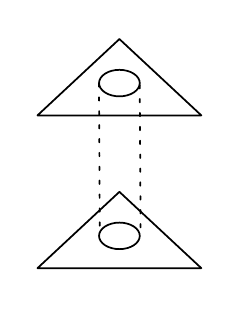}
\caption{Orbit space of Example \ref{ex:cylinderasorbitspace}}\label{fig:orbitspace}
\end{figure}
Then $M$ inherits a $T^2$-action, and the regular orbit space of that action is an open cylinder. Although for this action $H^*_{bas, c}(\Mr)\to \Hb^*(\Mr)$ is the zero map, the $T$-action has essential basic cohomology.
\end{ex}
However, if $\dim T=1$ we have
\begin{cor} For an almost effective $S^1$-action on a compact manifold $M$ the following conditions are equivalent:
\begin{enumerate}
\item The action is equivariantly formal 
\item The action has no essential basic cohomology 
\item $H^*_{\bas,c}(\Mr)\to \Hb^*(\Mr)$ is the zero map.
\end{enumerate}
\end{cor}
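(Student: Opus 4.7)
The plan is to split on the maximal isotropy rank $b$ of the action, which for an almost effective $S^1$-action is either $0$ or $1$. In the degenerate case $b=0$ the action is locally free, $H^*_T(M)=H^*(M/T)$ is a finite-dimensional nonzero $\RR$-vector space and hence not free over $\RR[x]$, killing (1); meanwhile $M_{(1)}=\emptyset$ forces $\Mr=M$ and one may take $U=\emptyset$, so (2) collapses to $\Hb^*(M)=0$ and (3) to vanishing of the identity on $\Hb^*(M)$, both false. So all three conditions fail simultaneously.

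For $b=1$ (equivalently $M^{S^1}\neq\emptyset$) the equivalence (1)$\Leftrightarrow$(2) comes from Proposition~\ref{prop:CM+maxrkisEF}: equivariant formality is the conjunction of Cohen-Macaulayness and the existence of a point of maximal isotropy rank, the latter automatic when $b=1$. So (1) reduces to $H^*_T(M)$ being Cohen-Macaulay, and by Lemmas~\ref{lem:dimension} and~\ref{lem:CMconditions} this means $\depth H^*_T(M)=1$. Since depth is bounded above by dimension, this is the same as $\depth H^*_T(M)\geq 1$, which Proposition~\ref{depthesscohom} identifies with (2). The implication (2)$\Rightarrow$(3) is already the content of the paragraph preceding Example~\ref{ex:cylinderasorbitspace}, so the essential new ingredient is (3)$\Rightarrow$(2).

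For (3)$\Rightarrow$(2) I would observe that in the case $\dim T=1$ conditions (2) and (3) sit on the two sides of a single long exact sequence. Take a closed $T$-invariant tubular neighborhood $\bar U$ of $M^{S^1}$ with interior $U$, and set $V:=M\setminus U$; this is a compact $T$-manifold with boundary $\partial V$ the unit sphere bundle of the normal bundle of $M^{S^1}$ in $M$, and $T$ acts locally freely on $V$. The $T$-equivariant radial retraction in the disc bundle makes $V\hookrightarrow \Mr$ and $\partial V\hookrightarrow U\cap \Mr$ into homotopy equivalences, so
\[
\Hb^*(\Mr)=H^*(V/T), \qquad \Hb^*(U\cap \Mr)=H^*(\partial V/T),
\]
while $H^*_{\bas,c}(\Mr)=H^*_c(\Mr/T)=H^*(V/T,\partial V/T)$ by local freeness and the fact that $V/T$ is compact with collared boundary $\partial V/T$. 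The long exact sequence of the pair $(V/T,\partial V/T)$ reads
\[
\cdots\to H^n_{\bas,c}(\Mr)\to \Hb^n(\Mr)\to \Hb^n(U\cap \Mr)\to H^{n+1}_{\bas,c}(\Mr)\to\cdots,
\]
and exactness gives immediately that the first map vanishes in every degree iff the second is injective in every degree, proving (3)$\Leftrightarrow$(2). The main obstacle I anticipate is the careful justification of these cohomological identifications in the orbifold setting for $V/T$, in particular the collaring of $\partial V/T$; this should follow from the tubular neighborhood structure but deserves explicit verification.
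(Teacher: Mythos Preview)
Your argument is correct and follows essentially the same route as the paper. The paper's proof is terser: it notes that for an $S^1$-action $M_2=\emptyset$ and $M_{(1)}=M^{S^1}$, asserts that equivariant formality is equivalent to $\depth H^*_T(M)\geq 1$ (which you unpack via Proposition~\ref{prop:CM+maxrkisEF} and Lemma~\ref{lem:CMconditions}), invokes Proposition~\ref{depthesscohom} for $(1)\Leftrightarrow(2)$, and declares $(2)\Leftrightarrow(3)$ ``clear''. Your long exact sequence of the pair $(V/T,\partial V/T)$ is precisely what makes that last equivalence clear: since in this case $U$ and the tubular neighbourhood $V$ of $M\setminus \Mr$ can be taken to coincide, the maps in (2) and (3) are consecutive in the same long exact sequence, so injectivity of one is equivalent to vanishing of the other. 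Your case split on $b$ is harmless but unnecessary---the paper's uniform argument covers $b=0$ automatically (both $\depth\geq 1$ and equivariant formality fail, and Proposition~\ref{depthesscohom} still applies). Your concern about collaring $\partial V/T$ is easily addressed: the $T$-equivariant collar of $\partial V$ in $V$ descends to the quotient.
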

\begin{proof} For an $S^1$-action, $M_2=\emptyset$ and $M^1=M^{S^1}$. Equivariant formality is equivalent to $\depth H^*_T(M)\geq 1$, and thus the equivalence of $(1)$ and $(2)$ is Proposition \ref{depthesscohom}. The equivalence of $(2)$ and $(3)$ is clear.
\end{proof}
By combining a result in \cite{GT} with Proposition \ref{depthesscohom}, the notion of having essential basic cohomology allows to give an equivalent characterisation of equivariant injectivity. See Section \ref{notations} for the definition of the infinitesimal bottom stratum.
\begin{thm} \label{thm:charactinjectivity} Let $B$ denote the infinitesimal bottom stratum of the $T$-action on $M$. Then the following conditions are equivalent:
\begin{enumerate}
\item $H^*_T(M)\to H^*_T(B)$ is injective
\item for all $p\notin B$, we have $\depth H^*_T(M^{\mft_p,p})\geq \dim \mft_p+1$
\item for all $p\notin B$, the $T$-action on $M^{\mft_p,p}$ has no essential basic cohomology.
\end{enumerate}
\end{thm}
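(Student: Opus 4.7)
The plan is to split the theorem into two equivalences: $(2)\iff(3)$ follows by a pointwise reduction to Proposition~\ref{depthesscohom} applied on each component $M^{\mft_p,p}$, while $(1)\iff(2)$ is to be imported from the depth-theoretic characterisation of equivariant injectivity proved in \cite{GT}.

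For $(2)\iff(3)$ the key idea is to trade the (possibly non-effective) $T$-action on $N := M^{\mft_p,p}$ for its almost effective quotient. Since the identity component $T_p^0$ of the isotropy group of $p$ acts trivially on $N$, the group $T' := T/T_p^0$ acts almost effectively on $N$: at a point $q\in N$ that is regular for the $T$-action on $N$, one has $\mft_q = \mft_p$, so the $T'$-isotropy at $q$ is finite. Choose a splitting $\mft \isom \mft_p \oplus \mft'$, so that $S(\mft^*) = S(\mft_p^*) \tensor S(\mft'^*)$. Using that $T_p^0$ acts trivially on $N$ and freely on $ET_p^0$, the Borel construction factors and gives
\[
H^*_T(N) \isom S(\mft_p^*) \tensor H^*_{T'}(N)
\]
as graded $S(\mft^*)$-modules. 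Since $S(\mft_p^*)$ is polynomial of depth $\dim\mft_p$, one obtains
\[
\depth_{S(\mft^*)} H^*_T(N) = \dim\mft_p + \depth_{S(\mft'^*)} H^*_{T'}(N).
\]
Condition $(2)$ at $p$ therefore reads $\depth_{S(\mft'^*)} H^*_{T'}(N) \geq 1$, which by Proposition~\ref{depthesscohom}, applied to the almost effective $T'$-action on the compact manifold $N$, is equivalent to the $T'$-action on $N$ having no essential basic cohomology. As basic cohomology is computed from the orbit space and $N/T = N/T'$, this is exactly condition $(3)$ at $p$.

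For $(1)\iff(2)$ one invokes the criterion from \cite{GT}: the map $H^*_T(M) \to H^*_T(B)$ is injective precisely when, for every $p\notin B$, the module $H^*_T(M^{\mft_p,p})$ has depth strictly exceeding the value $\dim\mft_p$ it would otherwise have if $M^{\mft_p,p}$ were itself a bottom-stratum component (i.e.\ if the regular part $(M^{\mft_p,p})_{\reg}$ were all of $M^{\mft_p,p}$). The main obstacle is pinning down the exact \cite{GT} input powering $(1)\iff(2)$; once that is in place, the equivalence $(2)\iff(3)$ is a routine consequence of the depth-shift formula above together with Proposition~\ref{depthesscohom}.
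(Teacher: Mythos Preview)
Your outline for $(2)\iff(3)$ matches the paper's argument exactly: split off the trivially acting torus, apply the depth-shift formula, and reduce to Proposition~\ref{depthesscohom} for the almost effective complementary action. (The paper phrases this with a complementary subtorus $T'\subset T$ rather than the quotient $T/T_p^0$, but this is cosmetic.)

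For $(1)\iff(2)$ you have correctly identified that the input must come from \cite{GT}, but the gap you flag is real and you have slightly misdescribed what the reference provides. The relevant statement is \cite[Proposition~9.1]{GT}, and it is \emph{not} formulated directly as a depth condition: it says that $H^*_T(M)\to H^*_T(B)$ is injective if and only if, for every $p\notin B$, the module $H^*_T(M^{\mft_p,p})$ contains no \emph{invisible} element, i.e.\ no element whose support fails to contain any nonregular isotropy algebra of the $T$-action on $M^{\mft_p,p}$. One then passes to the almost effective $T'$-action, where the regular isotropy algebra is $0$, so ``invisible'' means ``support equal to $\{0\}$''. The existence of such an element is equivalent to the maximal ideal of $S(\mft'^*)$ being associated to $H^*_{T'}(M^{\mft_p,p})$, which is precisely $\depth H^*_{T'}(M^{\mft_p,p})=0$. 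This short support/depth translation is the missing link between the \cite{GT} criterion and your condition~$(2)$; once you insert it, your proof is complete and coincides with the paper's.
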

\begin{proof} 
For $p\in M$ choose a subtorus $T'\subset T$ such that $\mft=\mft_p\oplus \mft'$. Then $H^*_T(M^{\mft_p,p})=H^*_{T'}(M^{\mft_p,p})\otimes S(\mft_p^*)$, hence $\depth H^*_T(M^{\mft_p,p})=\depth H^*_{T'}(M^{\mft_p,p})+\dim \mft_p$. Because the $T'$-action is almost effective, $(2)\Leftrightarrow (3)$ follows from Proposition \ref{depthesscohom}.

Proposition 9.1 of \cite{GT} shows that $H^*_T(M)\to H^*_T(B)$ is injective if and only if for each $p\notin B$, $H^*_T(M^{\mft_p,p})$ does not contain an invisible element, i.e., an element whose support does not contain any nonregular isotropy algebra of the $T$-action on $M^{\mft_p,p}$. This in turn holds if and only if $H^*_{T'}(M^{\mft_p,p})$ does not contain an invisible element, which is equivalent to 
\[
1\leq \depth H^*_{T'}(M^{\mft_p,p})=\depth H^*_T(M^{\mft_p,p})-\dim \mft_p.
\]
This implies $(1)\Leftrightarrow (2)$.
\end{proof}

\begin{ex} Theorem \ref{thm:charactinjectivity} shows that the equivariant cohomology ot the $T^2$-action constructed in Example \ref{ex:cylinderasorbitspace} is not torsion-free. In fact, in this example $B=M^T$, hence injectivity of $H^*_T(M)\to H^*_T(B)$ is equivalent to $H^*_T(M)$ being torsion-free, but as noted in Example \ref{ex:cylinderasorbitspace}, the action has essential basic cohomology.
\end{ex}

\end{document}